\newcommand{\lebn}
\theoremstyle{plain}
\newtheorem{prop}[equation]{Proposition}
\newtheorem{thm}[equation]{Theorem}
\newtheorem{cor}[equation]{Corollary}
\newtheorem{lem}[equation]{Lemma}
\theoremstyle{definition}
\newtheorem{defn}[equation]{Definition}
\numberwithin{equation}{section}
\DeclareMathOperator{\NP}{NP}
\newcommand{\B}{\mathcal{B}}
\newcommand{\RE}{\mathcal{R}}
\newcommand{\IE}{\mathcal{I}}
\newcommand{\HE}{\mathcal{H}}
\newcommand{\XE}{\mathcal{X}}
\newcommand{\YE}{\mathcal{Y}}
\newcommand{\ZE}{\mathcal{Z}}
\newcommand{\LE}{\mathcal{L}}
\newcommand{\kk}{\Bbbk}
\newcommand{\lk}{\textrm{lk}}
\newcommand{\del}{\textrm{del}}
\newcommand{\Del}{\textrm{Del}}
\newcommand{\Add}{\textrm{Add}}
\newcommand{\cd}{\textrm{cochord}}
\newcommand{\reg}{\textrm{reg}}
\newcommand{\im}{\textrm{im}}
\newcommand{\D}{\Delta}
\begin{document}

\bibliographystyle{plain}

\title[Regularity and Lozin's transformation]{Bounding Castelnuovo-Mumford regularity of graphs via Lozin's transformation}
\author{T\" urker B\i y\i ko$\breve{g}$lu and Yusuf Civan}

\address{Department of Mathematics, Izmir Institute of Technology,
Gulbahce, 35437, Urla, Izmir, Turkey}

\address{Department of Mathematics, Suleyman Demirel University,
Isparta, 32260, Turkey.}

\email{tbiyikoglu@gmail.com\\
yusufcivan@sdu.edu.tr}

\keywords{Castelnuovo-Mumford regularity, induced matching number, decycling number, whisker.}

\date{\today}

\thanks{Both authors are supported by T\" UBA through Young Scientist Award
Program (T\" UBA-GEB\. IP/2009/06 and 2008/08) and by T\" UB\. ITAK, grant no:111T704}

\subjclass[2000]{13F55, 05E40.}

\begin{abstract}
We prove that when a Lozin's transformation is applied to a graph, the (Castelnuovo-Mumford) regularity of the graph increases
exactly by one, as it happens to its induced matching number. As a consequence, we show that the regularity of a graph can be bounded 
from above by a function of its induced matching number. We also prove that the regularity of a graph is always less than or equal to the sum
of its induced matching and decycling numbers.
\end{abstract} 

\maketitle
%%%%%%%%%%%%%%%%%%%%%%%%%%%%%%%%%%%%%%%%%%%%%%%%%%%%%%%%%%%%
\section{Introduction}

Castelnuovo-Mumford regularity (or just the regularity) is something of a two-way study in the sense that it is a fundamental invariant
both in commutative algebra and discrete geometry. The regularity is a kind of universal bound for measuring the complexity of a considered 
object (a module, a sheaf or a simplicial complex). Our interest here is to compute or provide better bounds for the regularity of
rings associated to graphs. We are primarily concerned on the computation of the regularity of the edge ring 
(or equivalently the Stanley-Reisner ring of the independence complex) of a given graph. One way to attack such a problem goes by translating
the underlying algebraic or topological language to that of graph's. Such an approach may enable us to bound the regularity of a graph via
other graph parameters, and the most likely candidate is the induced matching number. By a theorem of Katzman~\cite{MK},
it is already known that the induced matching number provides a lower bound for the regularity of a graph, and the characterization of graphs in which
the regularity equals to the induced matching number has been the subject of many recent papers~\cite{BC,HVT,MMCRTY,MV,VT,RW2}. 

During his search on the complexity of the induced matching number, Lozin~\cite{VVL} describes an operation 
(he calls it as the \emph{stretching operation}) on graphs, and he proves that when it is applied to a graph, 
the induced matching number increases exactly by one. His operation works simply by considering a vertex $x$ of a graph $G$
whose (open) neighborhood splitted into two disjoint parts $N_G(x)=Y_1\cup Y_2$, and replacing the vertex $x$ with a 
four-path on $\{y_1,a,b,y_2\}$ together with edges $uy_i$ for any $u\in Y_i$ and $i=1,2$ (see Section~\ref{section:reg-lozin}).
One of the interesting results
of his work is that the induced matching problem remains $\NP$-hard in a narrow subclass of bipartite graphs.
We here prove that his operation has a similar effect on the regularity:

\begin{thm}\label{thm:lozin+reg}
Let $G=(V,E)$ be a graph and let $x\in V$ be given. Then $\reg(\LE_x(G))=\reg(G)+1$, where $\LE_x(G)$ is
the Lozin's transform of $G$ with respect to the vertex $x$.
\end{thm}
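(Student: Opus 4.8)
The plan is to pass from the edge ideal to the independence complex and to argue with the topological (Hochster-type) description of regularity. Writing $\mathrm{Ind}(G)$ for the independence complex and $\widetilde H_\bullet(-;\kk)$ for reduced simplicial homology, I will use throughout that
\[
\reg(G)=\max\{\, j : \widetilde H_{j-1}(\mathrm{Ind}(G[W]);\kk)\neq 0 \text{ for some } W\subseteq V\,\},
\]
so $\reg(G)$ is one more than the top degree in which an induced subcomplex of $\mathrm{Ind}(G)$ carries homology. (Note that Katzman's bound $\nu(G)\le\reg(G)$ together with Lozin's result gives only $\reg(\LE_x(G))\ge\nu(G)+1$, which is weaker than the desired lower bound, so a homological argument is unavoidable.) Put $H=\LE_x(G)$, with new vertices $y_1,a,b,y_2$ inducing the path $y_1-a-b-y_2$, and observe that $H$ and $G$ agree on $V\setminus\{x\}$. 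Everything will follow from a single \emph{comparison lemma}: (i) for every $W'\subseteq V(H)$ the complex $\mathrm{Ind}(H[W'])$ is homotopy equivalent to $\mathrm{Ind}(G[W])$ or to its suspension $\Sigma\,\mathrm{Ind}(G[W])$ for some $W\subseteq V$; and (ii) for every $W_0\subseteq V$ the complex $\Sigma\,\mathrm{Ind}(G[W_0])$ is realized as some $\mathrm{Ind}(H[W'])$. Part (i) forces $\reg(H)\le\reg(G)+1$ and part (ii) forces $\reg(H)\ge\reg(G)+1$.

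For the lower bound I fix $W_0$ with $\widetilde H_{r-1}(\mathrm{Ind}(G[W_0]))\neq 0$, where $r=\reg(G)$. If $x\notin W_0$, then in $H[W_0\cup\{a,b\}]$ the edge $\{a,b\}$ is isolated from $G[W_0]$ (its only other neighbours $y_1,y_2$ are absent); since $\mathrm{Ind}(G_1\sqcup G_2)=\mathrm{Ind}(G_1)*\mathrm{Ind}(G_2)$ and $\mathrm{Ind}(K_2)\simeq S^0$, this complex is $\Sigma\,\mathrm{Ind}(G[W_0])$ and has homology in degree $r$. If $x\in W_0$, I take $W''=(W_0\setminus\{x\})\cup\{y_1,a,b,y_2\}$ and prove $\mathrm{Ind}(H[W''])\simeq\Sigma\,\mathrm{Ind}(G[W_0])$, which again yields homology in degree $r$; either way $\reg(H)\ge r+1$.

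For the upper bound I must show that no induced subcomplex of $\mathrm{Ind}(H)$ has homology above degree $r$. Given $W'\subseteq V(H)$, set $W_b=W'\setminus\{y_1,a,b,y_2\}$, so $H[W_b]=G[W_b]$; a set is independent in $H[W']$ iff its base part is independent in $G[W_b]$ and avoids $W_b\cap Y_1$ whenever $y_1$ is used and $W_b\cap Y_2$ whenever $y_2$ is used. Hence
\[
\mathrm{Ind}(H[W'])=\bigcup_{T}\; T * \mathrm{Ind}\big(G[W_b\setminus N_b(T)]\big),
\]
the union over independent sets $T$ of the induced gadget, where $N_b(T)$ is the set of base-vertices adjacent to $T$ (affected only by $y_1,y_2$). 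Peeling the degree-two vertices $a$ and $b$ through the topological form of the short exact sequence $0\to (R/(I{:}x_v))(-1)\to R/I\to R/(I,x_v)\to 0$ — that is, the mapping-cone description $\mathrm{Ind}\simeq\mathrm{Cone}\big(\mathrm{Ind}(\cdot\setminus N[v])\hookrightarrow\mathrm{Ind}(\cdot\setminus v)\big)$ — and feeding in that each $\mathrm{Ind}(G[U])$ has homology in degrees $\le r-1$, a Mayer--Vietoris bookkeeping shows the gadget adds at most one degree; thus $\widetilde H_k(\mathrm{Ind}(H[W']))=0$ for $k>r$.

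The crux is the full-gadget case $\mathrm{Ind}(H[W''])\simeq\Sigma\,\mathrm{Ind}(G[W_0])$ with $x\in W_0$, exactly where $x$ is essential to the top homology (for instance $G=C_n$, where $\LE_x$ yields $C_{n+3}$ and indeed $\mathrm{Ind}(C_{n+3})\simeq\Sigma\,\mathrm{Ind}(C_n)$). From the decomposition above, the independent singletons $\{a\}$ and $\{b\}$ cone off $\mathrm{Ind}(G[W_b])$ and, being non-joinable across the edge $a-b$, glue into the suspension $\Sigma\,\mathrm{Ind}(G[W_b])$ with poles $a,b$; the faces using $y_1$, $y_2$, and $\{y_1,y_2\}$ cone off $\mathrm{Ind}(G[W_b\setminus(W_b\cap Y_1)])$, $\mathrm{Ind}(G[W_b\setminus(W_b\cap Y_2)])$, and $\mathrm{Ind}(G[W_b\setminus N(x)])$, which is precisely the link data of $x$ in $\mathrm{Ind}(G[W_0])$. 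The obstacle is to match these pieces with $\mathrm{Ind}(G[W_0])=\mathrm{Ind}(G[W_b])\cup\big(x*\mathrm{Ind}(G[W_b\setminus N(x)])\big)$ so that the equator enlarges from $\mathrm{Ind}(G[W_b])$ to all of $\mathrm{Ind}(G[W_0])$; I expect to achieve this by an explicit deformation retraction (or discrete Morse matching) on the $P_4$-gadget that pushes the $y_1,y_2$-strata onto the $a,b$-suspension without changing homotopy type.
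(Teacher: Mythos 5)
Your overall architecture matches the paper's: prove the lower bound by exhibiting, for each $W_0\subseteq V$ with top homology, an induced subset of $\LE_x(G)$ whose independence complex is $\Sigma\,\IE(G[W_0])$, and prove the upper bound by controlling the homology of all induced subcomplexes of $\IE(\LE_x(G))$. However, both of the steps you flag as remaining are precisely where the real work lies, and as written each is a genuine gap. For the lower bound, the crux you name --- that $\IE(\LE_x(G)[W''])\simeq \Sigma\,\IE(G[W_0])$ when $x\in W_0$, for an \emph{arbitrary} partition $N(x)=Y\cup Z$ --- is exactly Proposition~\ref{prop:lozin+stable} of the paper, and you only announce an intention (``I expect to achieve this by an explicit deformation retraction or discrete Morse matching''). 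The paper's route is concrete: first show (Lemma~\ref{lem:lozin+stable}) that moving one vertex $u$ from $Z$ to $Y$ does not change the homotopy type, via Adamaszek's isolating-edge moves $\Add(u,y;b)$ and $\Del(u,z;a)$ along the $P_4$-gadget; this reduces everything to the one-sided partition $(N_G(x),\emptyset)$, where the fold $N(z)\subseteq N(a)$ collapses the gadget to a disjoint $K_2$ and the suspension is immediate. Without some such argument your lower bound is only established in the case $x\notin W_0$, which does not suffice (the maximizing $W_0$ may be forced to contain $x$, as in your own $C_n$ example).

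The upper bound gap is more serious because the natural version of your plan fails. If you peel $a$ and then $b$ with the deletion/link inequality (Corollary~\ref{cor:induction-sc}) in a general decomposition $\LE_x(G;Y,Z)$, the graphs that appear --- e.g.\ $\LE_x(G)-N[a]$, which is $G-x$ with a new vertex $z$ attached only to $Z$, or $(\LE_x(G)-a)-N[b]$, which is $G-x$ with $y$ attached only to $Y$ --- are obtained from $G$ by \emph{deleting edges}, not by passing to induced subgraphs. Regularity is monotone only under induced subgraphs, so there is no a priori bound $\reg(G_Y),\reg(G_Z)\le\reg(G)$, and your ``Mayer--Vietoris bookkeeping'' has nothing to feed on; a careless count gives $\reg(G)+2$. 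The paper circumvents this by first proving that $\reg(\LE_x(G;Y,Z))$ is the same for \emph{all} decompositions (Lemma~\ref{lem:lozin+reg}), which itself rests on a nontrivial isolating-edge statement at the level of regularity, not just homotopy type (Proposition~\ref{prop:reg+isolating}, proved by a maximal-subset plus Mayer--Vietoris argument applied to every induced subcomplex, since a homotopy equivalence $\IE(H)\simeq\IE(H-e)$ of the total complexes says nothing about induced subcomplexes). Only after reducing to the decomposition $(N_G(x),\emptyset)$ does a single application of Corollary~\ref{cor:induction-sc} at the vertex $a$ produce $G\cup K_2$ and $(G-x)\cup\{z\}$, whose regularities are $\reg(G)+1$ and $\le\reg(G)$. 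You need either this reduction or a genuinely different mechanism for handling the edge-deleted graphs; the proposal currently contains neither.
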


Theorem~\ref{thm:lozin+reg} implies that the computational complexity of the regularity of arbitrary graphs is equivalent to
that of bipartite graphs having sufficiently large girth with maximum degree three.

One of the main advantages of Theorem~\ref{thm:lozin+reg} is that we are able to prove that the regularity of any graph can be bounded
above by a function of its induced matching number. Moreover, we also show that the regularity of a graph is always less than or equal to
the sum of its induced matching and decycling numbers.

We further employ the language of graph coloring theory by introducing
a Gallai type graph~\cite{BLS, KC} associated to any given graph in order to describe a new class of graphs in which the regularity equals to the
induced matching number. 

The organization of the paper is as follows. In Section $2$, we fix our notations needed throughout, recall definitions and basic properties
of the regularity of simplicial complexes and graphs. The Section $3$ is devoted to the proof
of Theorem~\ref{thm:lozin+reg}, where we also analyze the topological effect of Lozin's transformation on the independence complexes of graphs.
In the final section, we provide various bounds on the regularity as promised.

%%%%%%%%%%%%%%%%%%%%%%%%%%%%%%%%%%%%%%%%%%%%%%%%%%%%%%%%%%%%%%%%%%%%%%%%%%%%%%%%%%%%%%%%%%%%%%%%%%
\section{Preliminaries}

Let $\D$ be a simplicial complex on the vertex set $V$, and let $\kk$ be any field. Then 
the \emph{Castelnuovo-Mumford regularity} (or just the \emph{regularity}) $\reg_{\kk}(\D)$ of $\D$ over $\kk$
is defined by 
\begin{equation*}
\reg_{\kk}(\D):=\max \{j\colon \widetilde{H}_{j-1}(\D[S];\kk)\neq 0\;\textrm{for\;some}\;S\subseteq V\},
\end{equation*}
where $\D[S]:=\{F\in \D\colon F\subseteq S\}$ is the induced subcomplex of $\D$ by $S$, and $\widetilde{H}_{*}(-;\kk)$ denotes
the (reduced) singular homology. Note that this definition of the regularity coincides with the algebraic one 
via the well-known Hochster's formula. 

Some of the immediate consequences of the above definition are as follows. Firstly, the regularity is dependent on the characteristic of
the coefficient field (compare Example $3.6$ of \cite{MV}). Secondly, it is not a topological invariant, and it is monotone decreasing with respect to
the induced subcomplex operation, that is, $\reg_{\kk}(\D[K])\leq \reg_{\kk}(\D)$ for any $K\subseteq V$. In most cases, our results are independent
of the choice of the coefficient field, so we drop $\kk$ from our notation.

Even if the regularity is not a topological invariant, the use of topological methods plays certain roles. In many cases, we will appeal
to an induction on the cardinality of the vertex set by a particular choice of a vertex accompanied by two subcomplexes. To be more explicit,
if $x$ is a vertex of $\D$, then the subcomplexes $\del_{\D}(x):=\{F\in \D\colon x\notin F\}$ and 
$\lk_{\D}(x):=\{R\in \D\colon x\notin R\;\textrm{and}\;R\cup \{x\}\in \D\}$ are called the \emph{deletion} and \emph{link} of $x$ in $\D$ respectively. 
Such an association brings the use of a Mayer-Vietoris sequence of the pair $(\D,x)$:
\begin{equation*}
\cdots \to \widetilde{H}_{j}(\lk_{\D}(x)) \to \widetilde{H}_{j}(\del_{\D}(x))\to \widetilde{H}_{j}(\D)\to \widetilde{H}_{j-1}(\lk_{\D}(x)) \to
\cdots \widetilde{H}_0(\D)\to 0.
\end{equation*}

\begin{prop}\label{prop:induction-sc}
Let $\D$ be a simplicial complex and let $x\in V$ be given. Then
\begin{equation*}
\reg (\D)\leq \max\{\reg (\del_{\D}(x)), \reg(\lk_{\D}(x))+1\}.
\end{equation*}
\end{prop}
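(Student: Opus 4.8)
The plan is to extract a witnessing induced subcomplex for $\reg(\D)$ and feed it into the Mayer--Vietoris sequence displayed above. Write $r=\reg(\D)$ and, by definition of the regularity, choose $S\subseteq V$ with $\widetilde{H}_{r-1}(\D[S])\neq 0$. The whole argument hinges on one compatibility observation: deletion and link commute with passing to induced subcomplexes. Concretely, whenever $x\in S$ and we put $S'=S\setminus\{x\}$, a direct check from the definitions gives $\del_{\D[S]}(x)=(\del_{\D}(x))[S']$ and $\lk_{\D[S]}(x)=(\lk_{\D}(x))[S']$. This is the crucial step, because it lets me run a Mayer--Vietoris argument not merely for $\D$ itself but simultaneously for every induced subcomplex, which is exactly what the definition of the regularity requires.

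First I would dispose of the easy case $x\notin S$. Here $\D[S]=(\del_{\D}(x))[S]$, so $\widetilde{H}_{r-1}\bigl((\del_{\D}(x))[S]\bigr)\neq 0$, whence $\reg(\del_{\D}(x))\geq r$ and the claimed bound already holds. For the remaining case $x\in S$, I would apply the Mayer--Vietoris sequence to the pair $(\D[S],x)$. Using the two commutation identities above, the relevant three-term exact portion at degree $r-1$ reads $\widetilde{H}_{r-1}\bigl((\del_{\D}(x))[S']\bigr)\to \widetilde{H}_{r-1}(\D[S])\to \widetilde{H}_{r-2}\bigl((\lk_{\D}(x))[S']\bigr)$. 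Since the middle term is nonzero, exactness forbids both outer terms from vanishing at once, so at least one of them is nonzero. If the left-hand term survives, then $\reg(\del_{\D}(x))\geq r$; if the right-hand term survives, then $\reg(\lk_{\D}(x))\geq r-1$, i.e. $\reg(\lk_{\D}(x))+1\geq r$. In either situation $r\leq \max\{\reg(\del_{\D}(x)),\reg(\lk_{\D}(x))+1\}$, which is the assertion.

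The only genuinely delicate point is the verification that deletion and link pass correctly to induced subcomplexes; once that is in place, everything is a formal consequence of exactness together with the definition of the regularity. I would therefore concentrate the care on that compatibility check and on tracking the homological degree shift (the drop from $r-1$ to $r-2$ in the link term) so that the ``$+1$'' in the bound for $\lk_{\D}(x)$ comes out correctly.
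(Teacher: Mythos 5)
Your proof is correct and follows essentially the same route as the paper's: restrict to a witnessing subset $S$ for $\reg(\D)$, observe that deletion and link commute with passing to the induced subcomplex on $S$, and read off the conclusion from the three-term exact portion of the Mayer--Vietoris sequence of the pair $(\D[S],x)$. The only cosmetic difference is that the paper writes the restricted deletion and link as $\del_{\D}(x)[W]$ and $\lk_{\D}(x)[W]$ with $x\in W$ (harmless, since $x$ is not a vertex of either), while you restrict to $S\setminus\{x\}$.
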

\begin{proof}
Suppose that $\reg(\D)=k$, and let $W\subseteq V$ be a subset for which $\widetilde{H}_{k-1}(\D[W])\neq 0$. If $x\notin W$,
then $W\subseteq V(\del_{\D}(x))$ so that $\reg(\del_{\D}(x))\geq k$, that is, $\reg(\del_{\D}(x))=k$. Therefore, we may assume that
$x\in W$. 

We set $\D_0:=\D[W]$, $\D_1:=\del_{\D}(x)[W]$ and $\D_2:=\lk_{\D}(x)[W]$, and consider the Mayer-Vietoris sequence of the pair
$(\D_0,x)$:
\begin{equation*}
\cdots \to \widetilde{H}_{j}(\D_2) \to \widetilde{H}_{j}(\D_1)\to \widetilde{H}_{j}(\D_0)\to \widetilde{H}_{j-1}(\D_2) \to
\cdots \widetilde{H}_0(\D_0;\kk)\to 0.
\end{equation*}
Observe that $\D_1=\del_{\D_0}(x)$ and $\D_2=\lk_{\D_0}(x)$.
Now, if $\reg(\del_{\D}(x))<k$, then $\widetilde{H}_{k-1}(\D_1)=0$ so that $\widetilde{H}_{k-2}(\D_2)\neq 0$, since
$\widetilde{H}_{k-1}(\D_0)\neq 0$ by our assumption.
Thus, $\reg(\lk_{\D}(x))\geq k-1$. This proves the claim.
\end{proof}

We next review some necessary terminology from graph theory. 
By a graph $G=(V,E)$, we will mean an undirected graph without loops or
multiple edges. An edge between $u$ and $v$ is denoted by $e=uv$ or
$e=(u,v)$ interchangeably. A graph $G=(V,E)$ is called an \emph{edgeless graph} on $V$ whenever
$E=\emptyset$. If $U\subset V$, the graph induced on $U$ is written $G[U]$, and in particular,
we abbreviate $G[V\backslash U]$ to $G-U$, and write $G-x$ whenever $U=\{x\}$. In a similar vein,
we denote by $G-F$ the graph on $V$ having the edge set $E\backslash F$ whenever $F\subseteq E$.

The {\it complement} $\overline{G}:=(V,\overline{E})$ of a graph $G=(V,E)$ is the graph
on $V$ with $uv\in \overline{E}$ if and only if $uv\notin E$.
Throughout $K_n$, $C_n$ and $P_n$ will denote the complete, cycle and path graphs on $n$ vertices
respectively. Given two nonempty graphs  $G_1=(V_1,E_1)$ and $G_2=(V_2,E_2)$ on disjoint sets,
then their {\it disjoint union} $G_1\cup G_2$ is defined to be the graph $G_1\cup G_2=(V_1\cup V_2,E_1\cup E_2)$.

For a given subset $U\subseteq V$, the (open) neighborhood of $U$ is
defined by $N_G(U):=\cup_{u\in U}N_G(u)$, where $N_G(u):=\{v\in V\colon uv\in E\}$,
and similarly, $N_G[U]:=N_G(U)\cup U$ is the 
closed neighborhood of $U$. Furthermore,
if $F=\{e_1,\ldots,e_k\}$ is a subset of edges of $G$, we write $N_G[F]$ for the set
$N_G[V(F)]$, where $V(F)$ is the set of vertices incident to edges in $F$.

A subset $I\subseteq V$ is called an {\it independent set} whenever $G[I]$ is an edgeless graph.
The set of all independent sets of $G$ forms a simplicial complex $\IE(G)$, the {\it independence complex}
of $G$. The largest cardinality of an independent set in $G$ is called the {\it independence number} of $G$ and
denoted by $\alpha(G)$. 

\begin{defn}
Let $G$ be a graph. The {\it regularity} of $G$ is defined by $\reg(G):=\reg(\IE(G))$.
\end{defn}

When considering the complex $\IE(G)$, the deletion and link of a given vertex $x$ correspond to the independence complexes
of induced subgraphs, namely that $\del_{\IE(G)}(x)=\IE(G-x)$ and $\lk_{\IE(G)}(x)=\IE(G-N_G[x])$. Therefore, the following is an immediate
consequence of Proposition~\ref{prop:induction-sc}, which is also proven in algebraic setting in~\cite{MV}.

\begin{cor}\label{cor:induction-sc}
Let $G$ be a graph and let $v\in V$ be given. Then
\begin{equation*}
\reg (G)\leq \max\{\reg (G-v), \reg(G-N_G[v])+1\}.
\end{equation*}
\end{cor}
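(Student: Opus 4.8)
The plan is to specialize Proposition~\ref{prop:induction-sc} to the independence complex $\IE(G)$, using the dictionary between the deletion and link of a vertex in $\IE(G)$ and the independence complexes of induced subgraphs of $G$. Concretely, I would set $\D:=\IE(G)$ and take the distinguished vertex to be $x:=v$; since a vertex of the complex $\IE(G)$ is precisely a vertex of $G$, this substitution is legitimate.

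First I would record (or verify) the two combinatorial identities $\del_{\IE(G)}(v)=\IE(G-v)$ and $\lk_{\IE(G)}(v)=\IE(G-N_G[v])$ stated just above the corollary. The first holds because an independent set of $G$ avoiding $v$ is exactly an independent set of the induced subgraph $G-v$, and conversely. The second holds because a face $R$ of $\lk_{\IE(G)}(v)$ is an independent set with $v\notin R$ and $R\cup\{v\}\in\IE(G)$; the latter condition forces $R\cap N_G(v)=\emptyset$, so that $R$ is an independent set disjoint from $N_G[v]$, i.e.\ a face of $\IE(G-N_G[v])$, and this correspondence is clearly reversible.

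With these identities in hand the rest is pure substitution. By definition $\reg(G)=\reg(\IE(G))$, and likewise $\reg(G-v)=\reg(\IE(G-v))=\reg(\del_{\IE(G)}(v))$ together with $\reg(G-N_G[v])=\reg(\IE(G-N_G[v]))=\reg(\lk_{\IE(G)}(v))$. Feeding these into the inequality of Proposition~\ref{prop:induction-sc}, applied to $\D=\IE(G)$ and $x=v$, yields exactly $\reg(G)\le\max\{\reg(G-v),\ \reg(G-N_G[v])+1\}$.

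Since Proposition~\ref{prop:induction-sc} already performs all the homological work through the Mayer--Vietoris sequence, I do not expect any genuine obstacle here. The only point requiring care is the verification of the two identifications of the link and deletion with $\IE(G-v)$ and $\IE(G-N_G[v])$, which is the combinatorial heart of the translation; once those are invoked the argument is a couple of lines.
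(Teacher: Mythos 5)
Your proposal is correct and follows exactly the paper's route: the paper derives the corollary as an immediate consequence of Proposition~\ref{prop:induction-sc} via the same identifications $\del_{\IE(G)}(v)=\IE(G-v)$ and $\lk_{\IE(G)}(v)=\IE(G-N_G[v])$, which you verify in slightly more detail. Nothing is missing.
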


We say that $G$ is $H$-free if no induced subgraph of $G$ is isomorphic to $H$.
A graph $G$ is called \emph{chordal} if it is $C_k$-free for any $k\geq 4$, and a graph is said to be
\emph{cochordal} if its complement is a chordal graph.
A subset $S\subseteq V$ is called a {\it complete} of $G$ if $G[S]$ is isomorphic to a complete
graph, and the largest cardinality of a complete of $G$ is called the \emph{clique number} of $G$ and
denoted by $\omega(G)$.

Recall that a subset $M\subseteq E$ is called a {\it matching} of $G$ if no two edges in $M$ share a common end. 
Moreover, a matching $M$ of $G$ is an {\it induced matching}
if it occurs as an induced subgraph of $G$, and the cardinality of a maximum induced matching is called the
{\it induced matching number} of $G$ and denoted by $\im(G)$. 

Let $G$ be a graph, and $\HE$ be a family of graphs. The $\HE$-{\it cover number} of $G$ is
the minimum number of subgraphs $H_1,\ldots,H_r$ of $G$ such that every $H_i\in \HE$ and $\cup E(H_i)=E(G)$.
In particular, we denote by $\cd(G)$, the {\it cochordal cover number} of $G$ (see~\cite{RW2}). 

\begin{thm}[\cite{MK, RW2}]\label{thm:im-reg-cd}
The inequality $\im(G)\leq \reg(G)\leq \cd(G)$ holds for any graph $G$.
\end{thm}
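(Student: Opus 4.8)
The plan is to establish the two inequalities by completely different routes: the lower bound $\im(G)\le\reg(G)$ is a single explicit homology computation, whereas the upper bound $\reg(G)\le\cd(G)$ is an induction on the cochordal cover number driven by a subadditivity principle. For the lower bound I would fix a maximum induced matching $M=\{e_1,\ldots,e_k\}$ with $k=\im(G)$ and take $S=\bigcup_i V(e_i)$, the set of its $2k$ endpoints. Since $M$ is an \emph{induced} matching, $G[S]$ is a disjoint union of $k$ edges, so $\IE(G)[S]=\IE(G[S])$ is the $k$-fold join of $\IE(K_2)$. As $\IE(K_2)$ is a $0$-sphere, this join is a $(k-1)$-sphere, whence $\widetilde{H}_{k-1}(\IE(G)[S];\kk)\neq 0$ and the definition of regularity immediately gives $\reg(G)\ge k$. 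The only facts needed are that $\IE$ of a disjoint union is the join of the factors and that a $k$-fold join of $0$-spheres is $S^{k-1}$.

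For the upper bound I would first observe that the regularity of a complex defined in Section~2 is nothing but its $\kk$-Leray number $L_{\kk}(\D)=\min\{d\colon \widetilde{H}_i(\D[S];\kk)=0\ \textrm{for all}\ i\ge d\ \textrm{and all}\ S\}$, since both equal one plus the top dimension in which some induced subcomplex carries reduced homology. This reframing puts two ingredients at my disposal. First, a flag complex is $1$-Leray precisely when it is the clique complex of a chordal graph (Wegner), equivalently Fr\"oberg's theorem on linear resolutions; applied to $\IE(H)$, the clique complex of $\ol{H}$, this says that $H$ cochordal forces $\reg(\IE(H))\le 1$, which is my base case. Second, a vertex set is independent in a union of edge sets iff it is independent in each summand, so writing a minimal cochordal cover $E(G)=E(H_1)\cup\cdots\cup E(H_r)$ with $r=\cd(G)$ and setting $G'=(V,E(H_2)\cup\cdots\cup E(H_r))$ gives $\IE(G)=\IE(H_1)\cap\IE(G')$, an identity that persists after restriction to any $S$. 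An induction on $r$ together with the subadditivity $L_{\kk}(\D_1\cap\D_2)\le L_{\kk}(\D_1)+L_{\kk}(\D_2)$ then yields $\reg(G)\le\reg(\IE(H_1))+\reg(G')\le 1+(r-1)=r$.

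The hard part will be exactly this subadditivity of Leray numbers under intersection, which is the theorem of Kalai--Meshulam and the crux of the upper bound. It is precisely here that the vertex-deletion estimate of Corollary~\ref{cor:induction-sc} no longer suffices: rather than peeling off one vertex, one must control the reduced homology of the intersection $\IE(H_1)[S]\cap\IE(G')[S]$ simultaneously for every $S\subseteq V$. The natural mechanism is a Mayer--Vietoris sequence relating this intersection to $\IE(H_1)[S]$, $\IE(G')[S]$ and their union, run over all $S$ and bookkept so that the two Leray thresholds add. Verifying that this addition goes through for every induced subcomplex, and that the resulting bound is insensitive to the field $\kk$ on the cochordal pieces, is where essentially all of the difficulty resides.
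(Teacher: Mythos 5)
The paper does not prove this statement; it is quoted directly from Katzman \cite{MK} (lower bound) and Woodroofe \cite{RW2} (upper bound), so there is no internal proof to compare you against --- the right benchmark is those two sources, and your reconstruction matches them. The lower bound as you give it is complete and correct: for a maximum induced matching with endpoint set $S$ one has $G[S]\cong kK_2$, so $\IE(G)[S]=\IE(G[S])$ is the $k$-fold join of copies of $S^0$, i.e.\ a $(k-1)$-sphere, and $\widetilde{H}_{k-1}\neq 0$ gives $\reg(G)\geq \im(G)$; this is exactly Katzman's argument. The upper bound is Woodroofe's: the identification of $\reg$ with the Leray number is correct for the definition in Section 2, Fr\"oberg's theorem gives $\reg(\IE(H))\leq 1$ for cochordal $H$, the identity $\IE(G)=\IE(H_1)\cap\cdots\cap\IE(H_r)$ holds once each $H_i$ is viewed on the full vertex set (harmless, since the extra vertices are apices that cone off induced subcomplexes and do not change the regularity), and Kalai--Meshulam subadditivity finishes the induction.

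The one caveat concerns the subadditivity itself, which you rightly flag as the crux but whose proof mechanism you slightly undersell. In the Mayer--Vietoris sequence the group $\widetilde{H}_j(\D_1[S]\cap\D_2[S])$ is pinched between $\widetilde{H}_j(\D_1[S])\oplus\widetilde{H}_j(\D_2[S])$ and $\widetilde{H}_{j+1}(\D_1[S]\cup\D_2[S])$, so killing it in degrees $j\geq L(\D_1)+L(\D_2)$ forces you to prove a \emph{companion} bound $L(\D_1\cup\D_2)\leq L(\D_1)+L(\D_2)+1$ at the same time; Kalai and Meshulam run the two estimates as a joint induction (on the vertex set, via links and deletions), not as a single bookkept sequence. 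You do mention the union, but a reader following only your sketch would not see why its homology vanishes one degree higher. As long as the subadditivity is cited as an external theorem rather than derived from the sequence alone, your argument is sound and is, in substance, the proof the paper is implicitly relying on.
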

Both bounds are far from being tight even for connected graphs. We have constructed in~\cite{BC} a connected
graph $G_n$ for each $n\geq 1$ such that $\cd(G_n)=\reg(G_n)+n$ (see Proposition 3.12 of~\cite{BC}). For the lower bound,
let $R_n$ be the graph obtained from $(n+1)$ disjoint five cycles by adding an extra vertex and connecting it to exactly one vertex
of each five cycle (see Figure~\ref{fig-reg-im-1}). Then the equality $\reg(R_n)=\im(R_n)+n$ for any $n\geq 1$ follows from
Corollary~\ref{cor:induction-sc}. 
\begin{figure}[ht]
\begin{center}
\includegraphics[width=3.1in,height=1in]{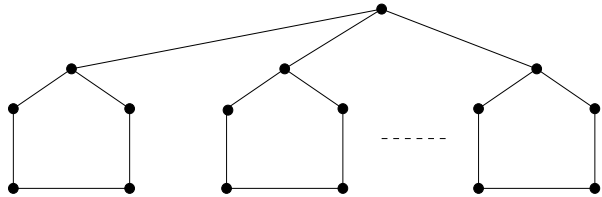}
\end{center}
\caption{The graph $R_n$.}
\label{fig-reg-im-1}
\end{figure}
The graph $R_n$ is also interesting for one other reason that we explain next.
A vertex $x$ of $G$ is called a {\it shedding vertex} if for every independent set $S$ in $G-N_G[x]$, there is some vertex $v\in N_G(x)$
so that $S\cup \{v\}$ is independent. A graph $G$ is called {\it vertex decomposable} if either it is an edgeless graph or it has a shedding
vertex $x$ such that $G-x$ and $G-N_G[x]$ are both vertex-decomposable.

\begin{thm}[\cite{BC}]\label{thm:vd-reg}
If $G$ is a $(C_4,C_5)$-free vertex decomposable graph, then $\reg(G)=\im(G)$.
\end{thm}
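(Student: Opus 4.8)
The plan is to combine the lower bound $\im(G)\le \reg(G)$ from Theorem~\ref{thm:im-reg-cd} with an induction establishing the reverse inequality $\reg(G)\le \im(G)$, so that the two coincide. I would induct on $|V|$. If $G$ is edgeless then $\reg(G)=0=\im(G)$ and we are done. Otherwise vertex decomposability hands us a shedding vertex $x$ for which $G-x$ and $G-N_G[x]$ are again vertex decomposable; since being $(C_4,C_5)$-free passes to induced subgraphs, both are $(C_4,C_5)$-free vertex decomposable graphs on fewer vertices, and by the inductive hypothesis $\reg(G-x)=\im(G-x)$ and $\reg(G-N_G[x])=\im(G-N_G[x])$. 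Feeding this into Corollary~\ref{cor:induction-sc} yields $\reg(G)\le \max\{\im(G-x),\,\im(G-N_G[x])+1\}$.

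Since $\im(G-x)\le \im(G)$ is automatic, the whole theorem reduces to the single inequality $\im(G-N_G[x])+1\le \im(G)$, and establishing this is the crux. I would prove it by producing an induced matching of $G$ one larger than a maximum induced matching $M=\{a_1b_1,\dots,a_kb_k\}$ of $H:=G-N_G[x]$. Writing $V(M)=\{a_1,\dots,a_k,b_1,\dots,b_k\}$, it suffices to find a neighbour $v$ of $x$ adjacent to no vertex of $V(M)$, for then $M\cup\{xv\}$ is an induced matching of size $k+1$ (note that $x$ itself is non-adjacent to $V(M)$ because $V(M)\subseteq V\setminus N_G[x]$).

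The main obstacle is exactly the existence of such a $v$, and this is where the $(C_4,C_5)$-freeness and the shedding property interact. Two structural facts drive the argument. First, $C_4$-freeness forces any two vertices to have at most one common neighbour, so every vertex of $H$ has at most one neighbour in $N_G(x)$. Second, for a single matching edge $a_ib_i$ I would rule out the possibility that $a_i$ and $b_i$ have distinct neighbours $p\neq q$ inside $N_G(x)$: the closed walk $x,p,a_i,b_i,q$ is an induced $C_5$ when $p\not\sim q$, and the quadrilateral $p,a_i,b_i,q$ is an induced $C_4$ when $p\sim q$, both of which are forbidden. Hence the neighbours of $a_i$ and $b_i$ lying in $N_G(x)$ must coincide in a single vertex $p_i$, whenever they are present.

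With these two facts in hand I would argue by contradiction: assume every $v\in N_G(x)$ has a neighbour in $V(M)$. Then each such $v$ is one of the $p_i$, and I can assemble a transversal $T$ of $M$ — choosing for each edge the endpoint that $p_i$ actually sees — so that $N_G(x)\subseteq N_G(T)$. Because the subgraph induced on $V(M)$ is precisely the matching, every transversal $T$ is an independent set of $H$; thus $T$ is an independent set of $G-N_G[x]$ dominating $N_G(x)$, contradicting the fact that $x$ is a shedding vertex. Therefore the desired $v$ exists, giving $\im(G)\ge \im(G-N_G[x])+1$ and closing the induction. The delicate point I would watch most carefully is this conflict-free construction of $T$: the $C_4$/$C_5$ analysis is exactly what guarantees that the two endpoints of a matching edge never impose contradictory demands on which endpoint to place in $T$.
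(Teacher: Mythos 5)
The paper does not prove Theorem~\ref{thm:vd-reg}; it is quoted from~\cite{BC} without an argument, so there is nothing in this text to compare against line by line. Your skeleton --- the lower bound from Theorem~\ref{thm:im-reg-cd}, induction on $|V|$ through a shedding vertex $x$ via Corollary~\ref{cor:induction-sc}, and reduction of everything to the single inequality $\im(G-N_G[x])+1\le \im(G)$, proved by contradicting the shedding property with an independent transversal of a maximum induced matching of $G-N_G[x]$ --- is the right one and is in the spirit of the cited source.

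There is, however, a concrete error in the step you yourself flag as delicate: both of your ``structural facts'' are false as stated, because $C_4$-free and $C_5$-free mean \emph{no induced} copies. For the first fact, two vertices may have two common neighbours provided those neighbours are adjacent (the $4$-cycle then has a chord), so a vertex of $H$ can see several vertices of $N_G(x)$; fortunately this fact is never actually used. For the second, take $p\in N_G(x)$ adjacent to \emph{both} $a_i$ and $b_i$ and $q\in N_G(x)$ adjacent to $b_i$ only: then $a_i$ and $b_i$ have distinct neighbours $p\neq q$ in $N_G(x)$, yet your $5$-cycle $x\!-\!p\!-\!a_i\!-\!b_i\!-\!q\!-\!x$ has the chord $pb_i$ and nothing is forbidden, so your conclusion that all neighbours of the edge $a_ib_i$ inside $N_G(x)$ collapse to a single vertex $p_i$ is wrong. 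What your $C_4/C_5$ dichotomy does prove, once the hypotheses are sharpened, is exactly what the transversal needs: one cannot simultaneously have $p\in N_G(x)$ with $p\sim a_i$, $p\not\sim b_i$ and $q\in N_G(x)$ with $q\sim b_i$, $q\not\sim a_i$ (these extra non-adjacencies are precisely the missing chord conditions for an induced $C_4$ when $p\sim q$ and an induced $C_5$ when $p\not\sim q$). Hence for each $i$ some single endpoint $c_i\in\{a_i,b_i\}$ is adjacent to \emph{every} vertex of $N_G(x)$ having a neighbour on that edge, and $T=\{c_1,\dots,c_k\}$ is the independent set of $G-N_G[x]$ dominating $N_G(x)$ that contradicts sheddingness. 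With the lemma restated in this corrected form the argument closes; the remainder of your reduction (hereditariness of $(C_4,C_5)$-freeness and vertex decomposability, $\im(G-x)\le\im(G)$, the edgeless base case) is fine.
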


The bounds of Theorem~\ref{thm:im-reg-cd} can be tight if the graph lies in a specific graph class.

\begin{prop}[\cite{HVT,RW2}]\label{prop:chordal-reg}
If $G$ is a chordal graph, then $\im(G)=\reg(G)=\cd(G)$.
\end{prop}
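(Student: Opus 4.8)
The plan is to deduce the proposition entirely from the bounds already in hand. By Theorem~\ref{thm:im-reg-cd} we always have $\im(G)\le \reg(G)\le \cd(G)$, so all three invariants coincide for $G$ as soon as we establish the single reverse inequality $\cd(G)\le \im(G)$ for chordal graphs. Everything therefore reduces to producing, for a chordal graph $G$, a cover of $E(G)$ by at most $\im(G)$ cochordal subgraphs.

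I would prove $\cd(G)\le\im(G)$ by induction on the number of edges of $G$, the idea being to peel off a single cochordal subgraph that both preserves chordality and lowers the induced matching number by exactly one. Concretely, since $G$ is chordal and has an edge, it has a non-isolated simplicial vertex $v$ (pass to the subgraph on the non-isolated vertices if necessary); fix a neighbour $u\in N_G(v)$ and let $H$ be the subgraph of $G$ consisting of all edges incident to $N_G[u]$. Deleting these edges isolates every vertex of $N_G[u]$, so $G-E(H)$ equals $G-N_G[u]$ together with some isolated vertices; as an induced subgraph of a chordal graph it is again chordal, and it has strictly fewer edges, so the induction hypothesis applies to it.

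The decrease of the induced matching number is the clean part. Because $v$ is simplicial, $N_G[v]$ is a clique containing $u$, whence $N_G[v]\subseteq N_G[u]$; in particular every vertex lying outside $N_G[u]$ is non-adjacent to both $u$ and $v$. Thus if $M'$ were an induced matching of $G-N_G[u]$ of size $\im(G)$, then $M'\cup\{uv\}$ would be an induced matching of $G$ of size $\im(G)+1$, a contradiction. Hence $\im(G-E(H))=\im(G-N_G[u])\le \im(G)-1$, and combining $\cd(G)\le \cd(G-E(H))+1$ with the inductive bound $\cd(G-E(H))\le \im(G-E(H))$ gives $\cd(G)\le \im(G)$, which closes the induction.

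The main obstacle is verifying that $H$ itself is cochordal, i.e. that $\overline{H}$ is chordal; this is precisely the step where chordality of $G$ is indispensable. The same construction fails for $C_5$: taking any vertex as $u$ leaves $H\iso P_5$, whose complement contains an induced four-cycle, so $P_5$ is not cochordal. To push the argument through for chordal $G$ I would fix a perfect elimination ordering and show that any putative induced cycle of length at least four in $\overline{H}$ would force a chordless cycle in $G$, contradicting chordality; equivalently, one checks that $G$ being chordal and $N_G[u]$ being a closed neighbourhood forbids the induced obstructions (beginning with $2K_2$) in $H$. Establishing this cochordality cleanly is where the real work lies, and it is exactly what upgrades the elementary matching count into the stated equality.
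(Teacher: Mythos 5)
The paper offers no proof of this proposition at all --- it is quoted verbatim from \cite{HVT,RW2} --- so I can only judge your argument on its own terms. Your reduction to the single inequality $\cd(G)\le\im(G)$ via Theorem~\ref{thm:im-reg-cd} is fine, and so is the induced-matching decrement: since $v$ is simplicial, $N_G[v]\subseteq N_G[u]$, so every induced matching of $G-N_G[u]$ extends by the edge $uv$, giving $\im(G-N_G[u])\le\im(G)-1$. The fatal problem is exactly the step you flag as ``where the real work lies'': the subgraph $H$ of all edges meeting $N_G[u]$ is \emph{not} cochordal, even for trees. Take the tree $T$ with edges $vu$, $uw_1$, $uw_2$, $w_1x_1$, $w_2x_2$. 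The leaf $v$ is simplicial with unique neighbour $u$, and $N_T[u]=\{v,u,w_1,w_2\}$ meets every edge of $T$, so $H=T$. But $\{w_1,x_1,w_2,x_2\}$ induces a $2K_2$ in $H$, i.e.\ $\overline{H}$ contains an induced $C_4$, so $H$ is not cochordal. Worse, your induction would then yield $\cd(T)\le 1$, whereas $\cd(T)=\im(T)=2$. So the missing lemma is not merely hard to prove; it is false, and the argument built on it derives a false inequality. No perfect elimination ordering can rescue it.

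The root of the error is that $N_G[u]$ is not a clique. The lemma that actually powers the proof in \cite{RW2} is: for any \emph{clique} $K$ of an arbitrary graph, the subgraph of all edges meeting $K$ is cochordal (two disjoint edges anchored at distinct vertices of $K$ always acquire a chord inside $K$, which kills induced $2K_2$'s, and a similar counting argument excludes induced $C_5$ and $\overline{C_k}$ for $k\ge 6$). The closed neighbourhood $N_G[v]$ of the simplicial vertex itself \emph{is} a clique, so taking $K=N_G[v]$ does give a cochordal $H$ with $G-E(H)$ chordal; but then the other half of your induction breaks, because $\im$ need not drop --- in the tree $T$ above, deleting the star of edges at $N_T[v]=\{v,u\}$ leaves a $2K_2$ whose induced matching number is still $2=\im(T)$. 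So neither choice of anchor set makes the naive one-step peeling close, and the argument in \cite{RW2} has to choose its cochordal pieces more delicately. As written, your proposal does not establish the proposition.
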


The existence of vertices satisfying some extra properties is useful when dealing with the homotopy type of the independence complexes
of graphs.
\begin{thm}[\cite{AE, MT}]\label{thm:hom-induction}
If $N_G(u)\subseteq N_G(v)$, then there is a homotopy equivalence $\IE(G)\simeq \IE(G-v)$. On the other hand,
if $N_G[u]\subseteq N_G[v]$, then the homotopy equivalence $\IE(G)\simeq \IE(G-v)\vee \Sigma \IE(G-N_G[v])$
holds, where $\Sigma X$ denotes the (unreduced) suspension of $X$.
\end{thm}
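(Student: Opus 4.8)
The plan is to treat both assertions uniformly through the mapping-cone description of $\IE(G)$ associated with the chosen vertex $v$. Recall from the discussion preceding the theorem that $\del_{\IE(G)}(v)=\IE(G-v)$ and $\lk_{\IE(G)}(v)=\IE(G-N_G[v])$, and that $\IE(G)$ decomposes as the union of $\del_{\IE(G)}(v)$ with the closed star $v*\lk_{\IE(G)}(v)$, the two pieces meeting exactly along $\lk_{\IE(G)}(v)$. Since the star is a cone, this exhibits $\IE(G)$ as the (unreduced) mapping cone of the simplicial inclusion
\begin{equation*}
j\colon \IE(G-N_G[v])\hookrightarrow \IE(G-v).
\end{equation*}
As simplicial inclusions are cofibrations, I would then invoke the standard fact that the mapping cone of a null-homotopic map $f\colon X\to Y$ of well-pointed spaces is homotopy equivalent to $Y\vee \Sigma X$. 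Thus the whole theorem reduces to proving that $j$ is null-homotopic, together with identifying when the suspended summand disappears.

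For the second assertion, assume $N_G[u]\subseteq N_G[v]$ with $u\neq v$; then $u\in N_G[u]\subseteq N_G[v]$ forces $u$ to be adjacent to $v$. The key step is to show that the entire link lands in a cone inside $\IE(G-v)$, namely $\IE(G-N_G[v])\subseteq \mathrm{St}_{\IE(G-v)}(u)$. Concretely, for any face $S$ of $\IE(G-N_G[v])$ one checks that $S\cup\{u\}$ is an independent set avoiding $v$: independence follows since $N_G(u)\subseteq N_G[v]$ gives $S\cap N_G(u)\subseteq S\cap N_G[v]=\emptyset$, while $v\notin S\cup\{u\}$ because $v\in N_G[v]$ and $u\neq v$. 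Because $u\in N_G[v]$, the vertex $u$ is genuinely absent from $G-N_G[v]$, so $S\cup\{u\}$ really is a new face of the closed star of $u$. That star is a cone, hence contractible, so $j$ factors up to homotopy through a point and is null-homotopic; the splitting then yields $\IE(G)\simeq \IE(G-v)\vee \Sigma\,\IE(G-N_G[v])$.

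For the first assertion, assume $N_G(u)\subseteq N_G(v)$. Here $u\in N_G(v)$ would give $v\in N_G(u)\subseteq N_G(v)$, which is impossible, so $u$ and $v$ are non-adjacent and $u\notin N_G[v]$; thus $u$ survives as a vertex of $G-N_G[v]$. Its neighbours there form $N_G(u)\setminus N_G[v]$, which is empty because $N_G(u)\subseteq N_G(v)\subseteq N_G[v]$. Hence $u$ is isolated in $G-N_G[v]$, every independent set of that graph may be enlarged by $u$, and $\IE(G-N_G[v])$ is itself a cone with apex $u$, in particular contractible. Consequently $j$ is null-homotopic (its domain being contractible) and $\Sigma\,\IE(G-N_G[v])$ is contractible, so the same splitting collapses to $\IE(G)\simeq \IE(G-v)$, as claimed.

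The neighbourhood bookkeeping in the two membership checks is routine; the point I expect to require the most care is the clean invocation of the mapping-cone splitting. One must ensure well-pointedness (automatic for nonempty finite simplicial complexes), keep the unreduced suspension of the statement distinct from its reduced variant, and handle the degenerate cases in which $G-N_G[v]$ has no vertices, reading the formula with the conventions $\Sigma\emptyset=S^0$ and $\widetilde{H}_*(\emptyset)=0$ kept straight.
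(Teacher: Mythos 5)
The paper does not actually prove Theorem~\ref{thm:hom-induction}: it is imported verbatim from \cite{AE,MT}, so there is no internal proof to compare against; your attempt has to be judged against the cited literature and on its own merits. Judged so, it is correct and complete. The decomposition of $\IE(G)$ into the deletion $\IE(G-v)$ and the closed star $v*\IE(G-N_G[v])$, glued along the link, does exhibit $\IE(G)$ as the unreduced mapping cone of the inclusion $j\colon \IE(G-N_G[v])\hookrightarrow \IE(G-v)$, and both of your null-homotopy arguments are sound: when $N_G[u]\subseteq N_G[v]$ the link lands inside the closed star $\mathrm{St}_{\IE(G-v)}(u)$ (your two membership checks, that $S\cup\{u\}$ is independent and avoids $v$, are exactly the needed computations), while when $N_G(u)\subseteq N_G(v)$ the vertex $u$ is isolated in $G-N_G[v]$, so the link is itself a cone, which simultaneously makes $j$ null-homotopic and collapses the suspension summand. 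Your handling of the degenerate case $N_G[v]=V(G)$ via $\Sigma\emptyset=S^0$ is also consistent with what the formula must mean there.

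Two remarks on positioning. First, your route is essentially the modern homotopy-theoretic one, in the spirit of Adamaszek \cite{MA} and Marietti--Testa \cite{MT}; Engstr\"om's original proof of the first assertion in \cite{AE} is different in flavor, using fold collapses (discrete Morse theory), which yields the stronger conclusion that $\IE(G)$ collapses onto $\IE(G-v)$ rather than merely being homotopy equivalent to it. Second, your mapping-cone decomposition is precisely the geometric refinement of the tool the paper does use: the Mayer--Vietoris sequence of the pair $(\D,x)$ in the proof of Proposition~\ref{prop:induction-sc} is the homological shadow of your splitting. Only minor bookkeeping deserves mention: both assertions implicitly require $u\neq v$ (you use this silently in the first case when concluding $u\notin N_G[v]$), and the splitting $C_f\simeq Y\vee\Sigma X$ for null-homotopic $f$ rests on homotopy invariance of mapping cones along cofibrations, which your well-pointedness remark covers since finite simplicial complexes are CW complexes.
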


\begin{defn}
An edge $e=(u,v)$ is called an {\it isolating edge} of $G$ with respect to a vertex $w$, if $w$ is an isolated vertex of
$G-N_G[e]$.
\end{defn}

\begin{thm}[\cite{MA}]\label{thm:isolating}
If $\IE(G-N_G[e])$ is contractible, then the natural inclusion $\IE(G)\hookrightarrow \IE(G-e)$ is a homotopy equivalence.
\end{thm}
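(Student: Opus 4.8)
The plan is to realize $\IE(G-e)$ as the union of $\IE(G)$ with a contractible subcomplex and then invoke a gluing argument. Write $e=(u,v)$ and set $K:=\IE(G-N_G[e])$. The starting observation is a clean dichotomy on the faces of $\IE(G-e)$: a face $S\in\IE(G-e)$ fails to be independent in $G$ precisely when $\{u,v\}\subseteq S$, since $e$ is the only edge that distinguishes $G$ from $G-e$. Moreover, if $\{u,v\}\subseteq S$ then in $G-e$ neither $u$ nor $v$ is adjacent to any vertex of $T:=S\setminus\{u,v\}$, so every vertex of $T$ avoids $N_G[u]\cup N_G[v]=N_G[e]$; hence $T\in K$. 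Conversely, the same computation shows $\{u,v\}\cup T\in\IE(G-e)$ for every $T\in K$. Thus the new faces of $\IE(G-e)$ (those not in $\IE(G)$) are exactly the sets $\{u,v\}\cup T$ with $T\in K$.

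This suggests introducing the subcomplex $D$ of $\IE(G-e)$ generated by all these new faces; concretely $D=\{u,v\}\ast K$, the join of the full $1$-simplex on $\{u,v\}$ with $K$ (the vertices of $K$ lie in $V\setminus N_G[e]$, hence are disjoint from $\{u,v\}$, so the join makes sense as a subcomplex). The first key step is to verify two combinatorial identities: $\IE(G-e)=\IE(G)\cup D$ and $\IE(G)\cap D=\{u,v\}_{0}\ast K$, where $\{u,v\}_{0}$ denotes the two-point complex $\{\emptyset,\{u\},\{v\}\}$. The first holds because every face either contains or avoids $\{u,v\}$; the second because a face of $D$ lies in $\IE(G)$ exactly when it omits at least one of $u,v$. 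Consequently $\IE(G)\cap D=\{u,v\}_{0}\ast K\cong\Sigma K$ is the suspension of $K$, while $D$, being a cone (write it as $\{u\}\ast(\{v\}\ast K)$), is contractible regardless of $K$.

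The conclusion is then purely formal. The hypothesis that $K=\IE(G-N_G[e])$ is contractible forces $\Sigma K=\IE(G)\cap D$ to be contractible too, so the inclusion $\IE(G)\cap D\hookrightarrow D$ is a map between contractible complexes, hence a homotopy equivalence, and it is a cofibration because it is an inclusion of subcomplexes. Since $\IE(G-e)$ is the pushout of $\IE(G)\hookleftarrow\IE(G)\cap D\hookrightarrow D$, and pushouts of trivial cofibrations are again trivial cofibrations, the parallel map $\IE(G)\hookrightarrow\IE(G-e)$ is a homotopy equivalence — which is exactly the assertion. Equivalently, one may collapse the contractible $D$: the quotient maps $\IE(G-e)\to\IE(G-e)/D\cong\IE(G)/(\IE(G)\cap D)$ and $\IE(G)\to\IE(G)/(\IE(G)\cap D)$ are both homotopy equivalences, and a two-out-of-three argument identifies the inclusion itself as an equivalence.

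The two combinatorial identities are routine once the face dichotomy is isolated, so the only point requiring genuine care is the last one: ensuring that it is the \emph{given} inclusion, and not merely some abstract equivalence, that is promoted. This is precisely why the pushout square is arranged so that $\IE(G)\hookrightarrow\IE(G-e)$ appears as the pushout of the trivial cofibration $\IE(G)\cap D\hookrightarrow D$; after that no real obstacle remains. As a homological shadow of the same decomposition, feeding $\widetilde{H}_{*}(D)=\widetilde{H}_{*}(\IE(G)\cap D)=0$ into the Mayer--Vietoris sequence of $\IE(G-e)=\IE(G)\cup D$ already shows the inclusion induces isomorphisms on all reduced homology groups, which is all that is needed for the regularity applications even if one does not track the full homotopy type.
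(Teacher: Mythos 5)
Your argument is correct: the decomposition $\IE(G-e)=\IE(G)\cup\bigl(\{u,v\}\ast K\bigr)$ with $K=\IE(G-N_G[e])$, the identification of the intersection with $\Sigma K$, and the gluing/collapsing step all check out. The paper itself gives no proof of this statement (it is quoted from Adamaszek's paper \cite{MA}), and your proof is essentially the standard argument used in that source, so there is nothing further to flag.
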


In particular, Theorem~\ref{thm:isolating} implies that $\IE(G)\simeq \IE(G-e)$ whenever the edge $e$ is isolating. 
This brings the use of an operation, adding or removing an edge, on a graph without altering its homotopy type. We will
follow \cite{MA} to write $\Add(x,y;w)$ (respectively $\Del(x,y;w)$) to indicate that we add the edge
$e=xy$ to (resp. remove the edge $e=xy$ from) the graph $G$, where $w$ is the corresponding isolated vertex.

%%%%%%%%%%%%%%%%%%%%%%%%%%%%%%%%%%%%%%%%%%%%%%%%%%%%%%%%%%%%%%%%%%%%%%%%%%%%%%%%%%%%%%%%%%%
\section{Regularity of graphs and Lozin's transformation}\label{section:reg-lozin}

This section is devoted to the proof of Theorem~\ref{thm:lozin+reg}. We begin with recalling the definition of
the Lozin's transformation on graphs~\cite{VVL}.

Let $G=(V,E)$ be a graph and let $x\in V$ be given. The \emph{Lozin's transform} $\LE_x(G)$ of $G$ with respect to
the vertex $x$ is defined as follows:
\begin{itemize}
\item[(i)] partition the neighborhood $N_G(x)$ of the vertex $x$ into two subsets $Y$ and $Z$ in arbitrary way;\\
\item[(ii)] delete vertex $x$ from the graph together with incident edges;\\
\item[(iii)] add a $P_4=(\{y,a,b,z\},\{ya,ab,bz\})$ to the rest of the graph;\\
\item[(iv)] connect vertex $y$ of the $P_4$ to each vertex in $Y$, and connect $z$ to each vertex in $Z$.
\end{itemize}

\begin{figure}[ht]
\begin{center}
\psfrag{x}{$x$}\psfrag{y}{$y$}\psfrag{a}{$a$}\psfrag{b}{$b$}
\psfrag{z}{$z$}\psfrag{Y}{$Y$}\psfrag{Z}{$Z$}
\includegraphics[width=2.5in,height=0.8in]{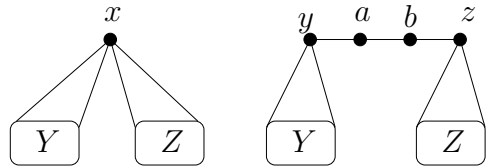}
\end{center}
\caption{The Lozin's transformation.}
\label{lozin-1}
\end{figure}

When the decomposition $N_G(x)=Y\cup Z$ is of importance, we will write $\LE_x(G;Y,Z)$ instead of $\LE_x(G)$.
It should be noted that we allow one of the sets $Y$ and $Z$ to be an empty set. Furthermore, if $x$ is an isolated vertex,
the corresponding Lozin's transform of $G$ with respect to the vertex $x$ is the graph $(G-x)\cup P_4$.
Throughout we will follow the convention made in~\cite{VVL} to use special notations for some particular classes of graphs:\\
$\XE_k$, the class of $(C_3,C_4,\ldots,C_k)$-free graphs,\\
$\YE_l$, the class of $(H_1,H_2,\ldots,H_l)$-free graphs,\\
$\ZE_3$, the class of graphs with maximum degree $3$,\\
$\B$, the class of bipartite graphs,\\
where the graph $H_n$ is depicted in Figure~\ref{fig-H}.

\begin{figure}[ht]
\begin{center}
\psfrag{1}{$1$}\psfrag{2}{$2$}\psfrag{n}{$n$}
\includegraphics[width=2.5in,height=0.8in]{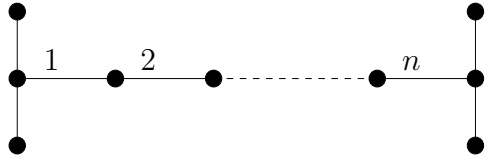}
\end{center}
\caption{Graph $H_n$.}
\label{fig-H}
\end{figure}

Lozin proves in \cite{VVL} that his transformation increases the induced matching number exactly by one. 
\begin{lem}[\cite{VVL}]\label{lem:lozin-inmatch}
For any graph $G$ and any vertex $x\in V(G)$, the equality $im(\LE_x(G;Y,Z))=im(G)+1$ holds.
Furthermore, any graph can be transformed by a sequence of Lozin transformations
into a graph in class $\XE_k\cap \YE_l\cap \ZE_3\cap \B$ with any integer $k\geq 3$ and $l\geq 1$.
\end{lem}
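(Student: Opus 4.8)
The plan is to establish the equality $\im(\LE_x(G;Y,Z))=\im(G)+1$ by proving the two inequalities separately, and then to indicate the iterative scheme that drives an arbitrary graph into the class $\XE_k\cap\YE_l\cap\ZE_3\cap\B$. Write $H:=\LE_x(G;Y,Z)$, and note that its edges fall into three types: the \emph{old} edges inherited from $G-x$, the three \emph{path} edges $ya,ab,bz$ of the attached $P_4$, and the \emph{bridge} edges $yw$ ($w\in Y$) and $zw$ ($w\in Z$). The single combinatorial fact driving everything is that $a$ and $b$ have degree two, with $N_H(a)=\{y,b\}$ and $N_H(b)=\{a,z\}$; in particular the path edges $ya$ and $bz$ can never lie together in an induced matching, since the edge $ab$ would then join two covered vertices.

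For the lower bound $\im(H)\ge\im(G)+1$, I would start from a maximum induced matching $M$ of $G$ and augment it by one edge inside the $P_4$. If $x\notin V(M)$, then every edge of $M$ survives in $H$ and, since $a,b$ are adjacent only to $y,z$ and to each other, $M\cup\{ab\}$ is an induced matching of $H$. If instead $xw\in M$, then $w$ is the unique neighbour of $x$ in $V(M)$, so $V(M)\cap(Y\cup Z)=\{w\}$; assuming $w\in Y$, I replace $xw$ by the two edges $yw$ and $bz$, and one checks that $(M\setminus\{xw\})\cup\{yw,bz\}$ is again induced, because $y$ meets $V(M)$ only in $w$ while $b,z$ meet the old vertices only along $Z$, which is disjoint from $V(M)$. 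The symmetric choice $\{zw,ya\}$ handles $w\in Z$. In every case the result has size $|M|+1$.

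The reverse inequality is the delicate half and the main obstacle. Starting from a maximum induced matching $M^{*}$ of $H$, let $M_O$ be the set of edges of $M^{*}$ avoiding $\{y,a,b,z\}$; since $H[V\setminus\{x\}]=G-x$, the set $M_O$ is an induced matching of $G$, and a short case check shows that at most two edges of $M^{*}$ meet the four path vertices, so $|M_O|\ge|M^{*}|-2$. If at most one edge meets them, then $M_O$ already has size $\ge|M^{*}|-1$ and we are done. The crux is the case of exactly two such edges: here the observation about $ya,bz$ forbids the purely internal configuration, so at least one of the two edges must be a bridge edge, say $yw$ with $w\in Y$ (the cases involving a $z$-bridge being symmetric). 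Because both $y$ and $z$ are then covered by $M^{*}$ and are adjacent to all of $Y$ and all of $Z$ respectively, any vertex of $Y\cup Z$ lying in $V(M^{*})$ must be matched by a bridge edge; in particular no such vertex lies in $V(M_O)$, so $V(M_O)\cap N_G(x)=\emptyset$, and since $w$ meets $V(M^{*})$ only along its bridge, $N_G(w)\cap V(M_O)=\emptyset$. These two facts guarantee that $M_O\cup\{xw\}$ is an induced matching of $G$ of size $|M_O|+1\ge|M^{*}|-1$, so $\im(G)\ge\im(H)-1$, which together with the previous paragraph yields the claimed equality.

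For the final assertion I would argue that finitely many transformations push any graph into $\XE_k\cap\YE_l\cap\ZE_3\cap\B$, choosing each partition $N_G(x)=Y\cup Z$ so as to improve several parameters at once. Splitting a vertex of degree $d$ replaces it by $y,a,b,z$ of degrees $|Y|+1,2,2,|Z|+1$, so repeatedly halving an oversized neighbourhood drives the maximum degree down to $3$; the inserted $P_4$ lengthens every walk that formerly ran through $x$ via opposite sides, so iterating raises the girth past any prescribed $k$ and destroys each of the finitely many forbidden subgraphs $H_1,\dots,H_l$; and since routing two neighbours of $x$ onto opposite sides $Y,Z$ changes the parity of every cycle through that corner, the partitions can be scheduled to break all odd cycles and reach a bipartite graph. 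Arranging the order of transformations so that no step spoils an already-secured property is the bookkeeping heart of this part, and it proceeds as in Lozin's original argument in \cite{VVL}.
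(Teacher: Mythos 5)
The paper gives no proof of this lemma at all: it is imported verbatim from Lozin's paper \cite{VVL}, so your attempt can only be judged on its own merits (or against Lozin's original argument). Your proof of the equality $\im(\LE_x(G;Y,Z))=\im(G)+1$ is correct and complete. The lower bound (adjoin $ab$ when $x\notin V(M)$; replace $xw$ by $\{yw,bz\}$ when $w\in Y$, or by $\{zw,ya\}$ when $w\in Z$, using that $V(M)\cap N_G(x)=\{w\}$ forces the new edges to be independent and non-adjacent to the rest) is the natural argument and checks out. The upper bound is the delicate half and you get it right: an induced matching of $H$ can contain at most two edges meeting $\{y,a,b,z\}$; two such edges cannot both be internal to the $P_4$ (any pair of disjoint internal edges is $\{ya,bz\}$, killed by the edge $ab$); and once a bridge edge, say $yw$ with $w\in Y$, lies in $M^{*}$, the induced condition forces every vertex of $Y\cup Z$ covered by $M^{*}$ to be matched along a bridge, whence $V(M_O)\cap N_G(x)=\emptyset$ and $N_G(w)\cap V(M_O)=\emptyset$, so $M_O\cup\{xw\}$ is an induced matching of $G$ of size $|M^{*}|-1$. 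All of these steps are sound.

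The genuine gap is in the second assertion. The mechanism you propose for reaching bipartiteness does not work as described: a transformation at $x$ adds $3$ to the length of precisely those cycles that enter $x$ through $Y$ and leave through $Z$; a cycle whose two edges at $x$ lie on the same side of the partition keeps its length and parity, and cycles avoiding $x$ are untouched. So ``scheduling the partitions to break all odd cycles'' is not routine bookkeeping -- as stated it is not even clear the process terminates, since a transformation that fixes one odd cycle can leave others intact and the parity effects interact globally. The clean route (and essentially Lozin's) is: first apply splits with $|Y|,|Z|\le\lceil d/2\rceil$ to drive the maximum degree down to $3$ (no parity control is needed here); then apply the special case in which one side of the partition is a singleton -- the triple subdivision -- to \emph{every} edge of the resulting graph. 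One such round replaces each edge by a path with four edges, hence multiplies the length of every cycle by $4$: all cycles become even, so the graph is bipartite, and the girth is multiplied by $4$, while the maximum degree is unchanged. Iterating these rounds pushes the girth past any prescribed $k$ and separates the degree-$3$ vertices far enough apart to exclude $H_1,\dots,H_l$, landing in $\XE_k\cap \YE_l\cap \ZE_3\cap \B$. Without an argument of this kind, your second part is an appeal to \cite{VVL} rather than a proof.
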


We next prove that the Lozin's transformation has a similar effect on the regularity. In order to simplify the notation, 
we note that when we mention the homology or homotopy of a graph, we mean that of its independence complex, so 
whenever it is appropriate, we drop $\IE(-)$ from our notation.
%%%%%%%%%%%%%%%%%%%%%%%%
\begin{prop}\label{prop:reg+isolating}
Let $e=(u,v)$ be an isolating edge of $H$ with respect to the vertex $w$. If $w$ has a neighbor $x$ of degree two,
then $\reg(H)=\reg(H-e)$.
\end{prop}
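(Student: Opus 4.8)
The plan is to exploit Theorem~\ref{thm:isolating} to replace the homotopy question with a purely regularity-theoretic statement, and then to combine it with the deletion-link recursion of Corollary~\ref{cor:induction-sc}. Since $e=(u,v)$ is an isolating edge of $H$ with respect to $w$, the complex $\IE(H-N_H[e])$ contains $w$ as an isolated (hence cone) vertex, so it is contractible. Theorem~\ref{thm:isolating} then gives a homotopy equivalence $\IE(H)\simeq \IE(H-e)$. The first thing I would stress, however, is that regularity is \emph{not} a homotopy invariant, so this equivalence alone does not yield $\reg(H)=\reg(H-e)$; it only controls the top nonvanishing homology of the full complexes, not of every induced subcomplex. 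The role of the degree-two neighbor $x$ of $w$ must be to upgrade the single homotopy equivalence into an equivalence that persists under passing to induced subcomplexes.

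Accordingly, the main step is to show that for \emph{every} subset $S\subseteq V(H)$ the restricted inclusion $\IE(H[S])\hookrightarrow \IE((H-e)[S])$ is still a homotopy equivalence, so that $\widetilde H_{j}(\IE(H[S]))\cong\widetilde H_{j}(\IE((H-e)[S]))$ for all $j$ and all $S$; taking the maximum over $S$ in the definition of regularity then forces $\reg(H)=\reg(H-e)$. To obtain this uniform statement I would verify Anton Dochtermann/Engstr\"om's isolating-edge hypothesis relative to each $S$: one needs $\IE(H[S]-N_{H[S]}[e])$ to be contractible whenever both endpoints $u,v$ lie in $S$. The deletion of the edge only matters when $u,v\in S$, and in that case the candidate isolated vertex is again $w$ provided $w\in S\setminus N_H[e]$. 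When $w\notin S$ the edge $e$ is already ``isolating by vacuity'' or the two complexes agree on $S$; the delicate case is $w\in S$ but $w$ has, within $S$, lost its isolation because some other neighbor of $w$ survived in $S$. This is exactly where the degree-two neighbor $x$ enters.

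The hypothesis that $w$ has a neighbor $x$ of degree two is what I expect to be the crux, and I would use it as follows. Because $\deg_H(x)=2$, the vertex $x$ has only two neighbors, one of which is $w$; call the other neighbor $t$. The edge $e=uv$ does not touch $x$ (otherwise $x$ would be an endpoint, not relevant to $w$'s isolation in the way we need), so in $H-e$ the vertex $x$ still has degree two. The plan is to run the deletion-link recursion of Corollary~\ref{cor:induction-sc} at the vertex $x$ simultaneously on $H$ and on $H-e$: the link $H-N_H[x]=H-\{w,x,t\}$ removes $w$ entirely and thereby removes the edge $e$'s relevance to $w$'s isolation, while the deletion $H-x$ keeps $w$ and allows an inductive comparison. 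Controlling both branches, $\reg(H-x)$ versus $\reg((H-e)-x)$ and $\reg(H-N_H[x])$ versus $\reg((H-e)-N_H[x])$, should reduce the claim to strictly smaller graphs where either the isolating-edge equivalence already applies cleanly or the edge $e$ has been absorbed. I would set this up as an induction on $|V(H)|$, using Theorem~\ref{thm:isolating} for the base relation and Corollary~\ref{cor:induction-sc} together with Theorem~\ref{thm:hom-induction} (since $\deg x=2$ forces neighborhood-containment relations that collapse $x$) for the inductive step.

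The main obstacle I anticipate is precisely the bookkeeping in the problematic case $w\in S$ with $w$ no longer isolated in $H[S]-N[e]$: one must argue that the degree-two condition on $x$ guarantees a \emph{fold} or cone point in the relevant induced subcomplex that restores contractibility of $\IE(H[S]-N_{H[S]}[e])$, or equivalently that the homology contribution from such $S$ is identical on both sides. Making this uniform over all $S$, rather than just for $S=V(H)$, is the real content; the homotopy equivalence of the full complexes is the easy part. I would therefore concentrate the proof on showing that the degree-two neighbor $x$ yields a containment of closed (or open) neighborhoods that lets Theorem~\ref{thm:hom-induction} be applied compatibly in $H$ and $H-e$, thereby transferring the isolating-edge contractibility down to every induced subcomplex and closing the induction.
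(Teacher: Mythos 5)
Your central claim --- that the inclusion $\IE(H[S])\hookrightarrow \IE((H-e)[S])$ is a homotopy equivalence for \emph{every} $S\subseteq V(H)$ --- is false, and this is a genuine gap rather than a bookkeeping issue. Take $H$ to be the path $v-u-x-w$, so that $e=(u,v)$ is isolating with respect to $w$ and $x$ is a degree-two neighbor of $w$; for $S=\{u,v\}$ the complex $\IE(H[S])$ is $S^0$ while $\IE((H-e)[S])$ is a $1$-simplex, so the two induced subcomplexes are not even homology-equivalent. The correct statement is weaker: the \emph{maximum} over $S$ of the top nonvanishing homology degree is the same on both sides, and the witnessing subset must in general be changed when passing from $H$ to $H-e$. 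You have also misplaced the delicate case. When $u,v,w\in S$ the edge $e$ remains isolating in $H[S]$ (every neighbor of $w$ lies in $N_H[e]$ by hypothesis, hence every neighbor of $w$ in $H[S]$ lies in $N_{H[S]}[e]$), so Theorem~\ref{thm:isolating} applies directly and nothing can go wrong; the hard case is $u,v\in S$ and $w\notin S$, where no contractibility of $\IE(H[S]-N_{H[S]}[e])$ is available. That is precisely where the degree-two vertex $x$ is used --- note first that the isolating hypothesis forces $x$ to be adjacent to $u$ or $v$, so one may assume $N_H(x)=\{u,w\}$ --- and the paper's argument there is not a homotopy equivalence of $\IE(H[S])$ with $\IE(G[S])$ but a transfer of nonvanishing homology to a \emph{different} subset: if $x\in S$ one folds away $v$ via $N_{H[S]}(x)\subseteq N_{H[S]}(v)$ (Theorem~\ref{thm:hom-induction}) to land in $G[S\setminus\{v\}]$, and if $x\notin S$ one takes a maximal witness $S$, adjoins $x$, and runs Mayer--Vietoris on the pair $(H[S\cup\{x\}],x)$ to push the homology class into $G[S\setminus\{u\}]$. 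Your proposal contains no mechanism for changing the witness set, so it cannot close this case.

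Your fallback plan --- running Corollary~\ref{cor:induction-sc} at $x$ simultaneously on $H$ and $H-e$ and inducting on $|V(H)|$ --- also cannot deliver the conclusion as described, because that corollary only provides the one-sided inequality $\reg(\D)\leq\max\{\reg(\del_{\D}(x)),\reg(\lk_{\D}(x))+1\}$; even perfect control of both branches for both graphs yields matching upper bounds, not the equality $\reg(H)=\reg(H-e)$. A correct proof needs the two separate inequalities $\reg(H-e)\geq\reg(H)$ and $\reg(H)\geq\reg(H-e)$, each obtained by starting from a maximal homology witness on one side and exhibiting (possibly after modifying the subset as above, and in one direction via an explicit suspension using that $x$ has degree one in the relevant induced subgraph) a subset with nonvanishing homology in the same degree on the other side.
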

\begin{proof}
Without loss of generality, we may assume that $N_H(x)=\{u,w\}$. We let $G:=H-e$, and assume that $\reg(H)=m$.
Let $S\subseteq V=V(H)=V(G)$ be a subset satisfying $\widetilde{H}_{m-1}(H[S])\neq 0$. We suppose that
$S$ is maximal in the sense that $\widetilde{H}_{m-1}(H[R])=0$ for any $S\subsetneq R$.

If $u\notin S$ or $v\notin S$, we clearly have $H[S]\cong G[S]$.
Thus, we may assume that $u,v\in S$. In this case, if $w\in S$, then $H[S]\simeq H[S]-e\cong G[S]$, since $e$ is then an isolating edge
of $H[S]$. So, suppose that $w\notin S$. If $x\in S$,
we conclude that $H[S]\simeq H[S]-v\cong G[S\backslash \{v\}]$, where the homotopy equivalence is due to the inclusion
$N_{H[S]}(x)\subseteq N_{H[S]}(v)$ (see Theorem~\ref{thm:hom-induction}). It follows that we may further assume $x\notin S$.
We then define $S^*:=S\cup \{x\}$, and consider the Mayer-Vietoris sequence of the pair $(H[S^*],x)$: 
\begin{equation*}
\cdots \to \widetilde{H}_{m-1}(H[S^*]-N_{H[S^*]}[x])\to \widetilde{H}_{m-1}(H[S^*]-x)\to \widetilde{H}_{m-1}(H[S^*]) \to \cdots. 
\end{equation*} 
Note that $\widetilde{H}_{m-1}(H[S^*])=0$ by the maximality of $S$. Moreover, since $H[S^*]-x\cong H[S]$,
we need to have $\widetilde{H}_{m-1}(H[S^*]-x)\neq 0$. Therefore, we conclude that $\widetilde{H}_{m-1}(H[S^*]-N_{H[S^*]}[x])\neq 0$.
But, $H[S^*]-N_{H[S^*]}[x]\cong H[S\backslash \{u\}]$ and $H[S\backslash \{u\}]\cong G[S\backslash \{u\}]$ so that we have
$\widetilde{H}_{m-1}(G[S\backslash \{u\}])\neq 0$. As a result, we deduce that $\reg(G)\geq \reg(H)$.  

Assume now that $\reg(G)=n$, and let $W\subseteq V$ be a maximal subset satisfying $\widetilde{H}_{n-1}(G[W])\neq 0$.
Similar to the previous cases, we may assume that $u,v\in W$ and $w\notin W$. Suppose first that
$x\in W$. Then, $G[W]\simeq \Sigma(G[W]-N_{G[W]}[u])$, since $x$ has degree one in $G[W]$ and $u$ is
its only neighbor. It follows that $\widetilde{H}_{n-2}(G[W]-N_{G[W]}[u])\neq 0$.
We define $K:=(W\backslash N_H(u))\cup \{v,x,w\}$. Since $e$ is an isolating edge
in $H[K]$, we have $H[K]\simeq H[K]-e$. However, the vertex $u$ is of degree one in $H[K]-e$ and $x$ is its only neighbor;
hence, $H[K]-e\simeq \Sigma((H[K]-e)-N_{H[K]-e}[x]))$ by Theorem~\ref{thm:hom-induction}. 
On the other hand, we have  $(H[K]-e)-N_{H[K]-e}[x]\cong G[W]-N_{G[W]}[u]$
so that $\widetilde{H}_{n-1}(H[K])\neq 0$. 

Finally, suppose now that $x\notin W$. As in the previous case, we let $W^*:=W\cup \{x\}$,
and consider the Mayer-Vietoris sequence of the pair $(G[W^*],x)$:
\begin{equation*}
\cdots \to \widetilde{H}_{n-1}(G[W^*]-N_{G[W^*]}[x])\to \widetilde{H}_{n-1}(G[W^*]-x)\to \widetilde{H}_{n-1}(G[W^*]) \to \cdots. 
\end{equation*} 
Again, we have $\widetilde{H}_{n-1}(G[W^*])=0$ by the maximality of $W$. Since $G[W^*]-x\cong G[W]$,
the group $\widetilde{H}_{n-1}(G[W^*]-x)$ is nontrivial that implies $\widetilde{H}_{n-1}(G[W^*]-N_{G[W^*]}[x])\neq 0$.
However, $G[W^*]-N_{G[W^*]}[x]\cong G[W\backslash \{u\}]$ and $G[W\backslash \{u\}]\cong H[W\backslash \{u\}]$ so that we have
$\widetilde{H}_{n-1}(H[W\backslash \{u\}])\neq 0$. As a result, we deduce that $\reg(H)\geq \reg(G)$. This completes
the proof.     
\end{proof}
%%%%%%%%%%%%%%%%%%%%%%%%%%%%
We next verify that the homotopy type of Lozin's transform 
can be deduced from the source graph.

\begin{lem}\label{lem:lozin+stable}
Let $G=(V,E)$ be a graph and let $x\in V$ be given. Then $\IE(\LE_x(G;Y,Z))\simeq \IE(\LE_x(G;Y',Z'))$
for any two distinct decompositions $\{Y,Z\}$ and $\{Y',Z'\}$ of $N_G(x)$. 
\end{lem}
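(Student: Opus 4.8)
The plan is to reduce the statement to a single \emph{elementary move} that transfers one vertex of $N_G(x)$ from one side of the partition to the other, and then to realize each such move as a composition of two isolating-edge operations via the $\Add/\Del$ calculus (Theorem~\ref{thm:isolating}). Any two decompositions of $N_G(x)$ (with empty parts allowed, as the construction permits) are linked by a finite chain of such moves, so transitivity of $\simeq$ will finish the argument. The fact that $\{Y,Z\}$ is an unordered pair causes no trouble: the automorphism of the attached path $y\text{-}a\text{-}b\text{-}z$ exchanging $y\leftrightarrow z$ and $a\leftrightarrow b$ gives a graph isomorphism $\LE_x(G;Y,Z)\cong \LE_x(G;Z,Y)$, hence a homeomorphism of independence complexes.

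So I would fix $w\in Z$ and set $G':=\LE_x(G;Y,Z)$, aiming to prove $\IE(G')\simeq \IE(\LE_x(G;Y\cup\{w\},Z\setminus\{w\}))$; the reverse move and iteration then yield all cases. Here the two internal path vertices have $N_{G'}(a)=\{y,b\}$ and $N_{G'}(b)=\{a,z\}$, and $w$ is joined to $z$ but not to $y$. First I would \emph{add} the edge $wy$, i.e.\ perform $\Add(w,y;b)$: in $G'+wy$ the closed neighborhood $N_{G'+wy}[wy]$ absorbs $a$ (a neighbor of $y$) and $z$ (a neighbor of $w$), which are exactly the two neighbors of $b$; since $b$ itself lies outside this set, it becomes isolated in $(G'+wy)-N_{G'+wy}[wy]$, so $wy$ is an isolating edge and $\IE(G')\simeq \IE(G'+wy)$. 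Next I would \emph{delete} the edge $wz$ from this intermediate graph, i.e.\ perform $\Del(w,z;a)$: now $N[wz]$ absorbs $y$ (a neighbor of $w$) and $b$ (a neighbor of $z$), which are exactly the two neighbors of $a$, leaving $a$ isolated; hence $wz$ is isolating and removing it preserves the homotopy type. The resulting graph is precisely $\LE_x(G;Y\cup\{w\},Z\setminus\{w\})$, completing the elementary move.

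The step I expect to be the only real point requiring care is verifying, in each of the two operations, that the designated witness ($b$ for the addition, $a$ for the deletion) is genuinely isolated and is not itself swept into the relevant closed neighborhood. This is exactly where the degree-two interior vertices of the inserted $P_4$ do the work: because $a$ and $b$ are adjacent only to the path-interior endpoints and to nothing in $Y$, $Z$, or $N_G(w)\setminus\{x\}$, each move absorbs precisely the two neighbors of one witness while leaving the witness untouched. This is the structural reason Lozin inserts a $P_4$ rather than a single vertex or a shorter path, and once these two neighborhood computations are checked the homotopy equivalence follows formally from Theorem~\ref{thm:isolating}.
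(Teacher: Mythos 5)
Your proof is correct and follows essentially the same route as the paper's: reduce to the elementary move of transferring one vertex of $N_G(x)$ across the partition, and realize that move as the pair of isolating operations $\Add(u,y;b)$ followed by $\Del(u,z;a)$, invoking Theorem~\ref{thm:isolating}. The only difference is that you spell out the neighborhood computations certifying that $b$ and $a$ are the isolated witnesses, which the paper leaves implicit.
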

\begin{proof}
To prove the claim, it is enough to verify that for a given decomposition $N_G(x)=Y\cup Z$ and a vertex $u\in Z$,
the complexes $\IE(\LE_x(G;Y,Z))$ and $\IE(\LE_x(G;Y\cup \{u\},Z\backslash \{u\})$ have the same homotopy type.
However, moving the vertex $u$ from $Z$ to $Y$
corresponds to the sequence of isolating operations $\Add(u,y;b)$ and $\Del(u,z;a)$ in $\LE_x(G;Y,Z)$;
therefore, the claim follows from Theorem~\ref{thm:isolating}.
\end{proof}

\begin{prop}\label{prop:lozin+stable}
Let $G=(V,E)$ be a graph and let $x\in V$ be given. Then $\IE(\LE_x(G))\simeq \Sigma(\IE(G))$.
\end{prop}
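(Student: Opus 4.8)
The plan is to compute the homotopy type of $\IE(\LE_x(G))$ by repeatedly applying the deletion/link machinery to the four new vertices $\{y,a,b,z\}$ of the attached $P_4$, exploiting the freedom granted by Lemma~\ref{lem:lozin+stable} to choose the most convenient decomposition of $N_G(x)$. First I would invoke Lemma~\ref{lem:lozin+stable} to reduce to the case where one of the parts is empty, say $Z=\emptyset$, so that $N_G(x)=Y$; then $\LE_x(G;Y,\emptyset)$ has the $P_4$ on $\{y,a,b,z\}$ attached with $y$ joined to all of $Y=N_G(x)$ and $z$ a pendant vertex (with $b$ its only neighbor). This is the decomposition in which the new path interacts with the rest of $G$ through the single vertex $y$, which should make the suspension appear most transparently.

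The key structural observation to deploy is Theorem~\ref{thm:hom-induction} applied to the pendant configurations in the $P_4$. With $Z=\emptyset$, the vertex $z$ has $N[z]=\{z,b\}\subseteq N[b]$ (since $b$ is adjacent to $a$ and $z$), and the vertex $a$ similarly sits in a closed-neighborhood containment with one of its neighbors. The second half of Theorem~\ref{thm:hom-induction} states that if $N_G[u]\subseteq N_G[v]$ then $\IE(G)\simeq \IE(G-v)\vee \Sigma\IE(G-N_G[v])$, while the first half gives a plain homotopy equivalence $\IE(G)\simeq\IE(G-v)$ under the open-neighborhood containment $N_G(u)\subseteq N_G(v)$. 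The strategy is to peel off $z$, $b$, and $a$ one at a time: whenever deleting one of these leaves a vertex with an empty or dominated link, one of the two wedge summands is contractible, so the wedge collapses to a single suspension. I expect that after removing the dangling end of the path the surviving complex on $\{y\}\cup(V\setminus\{x\})$ is exactly $\IE(G)$ with $y$ playing the former role of $x$, and that the suspension factor $\Sigma$ is precisely the one contributed by the closed-neighborhood containment at the step where $y$ gets folded in.

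Concretely, I would argue as follows. After fixing $Z=\emptyset$, apply Theorem~\ref{thm:hom-induction} at the pendant vertex $z$ (whose unique neighbor is $b$): since $N[z]\subseteq N[b]$, we get $\IE(\LE_x(G))\simeq \IE(\LE_x(G)-b)\vee\Sigma\IE(\LE_x(G)-N[b])$. In the deletion $\LE_x(G)-b$ the vertices $z$ and $a$ become isolated or pendant in a way that forces the first summand to be contractible (an isolated vertex makes the independence complex a cone, hence contractible), so only the suspension term survives. In $\LE_x(G)-N[b]=\LE_x(G)-\{a,b,z\}$ the $P_4$ has collapsed to the single vertex $y$ attached to all of $N_G(x)$, which is visibly isomorphic to $G$ with $y$ renamed $x$; thus $\IE(\LE_x(G))\simeq\Sigma\IE(G)$. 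The main obstacle I anticipate is bookkeeping which summand vanishes at each peeling step: I must verify carefully that the neighborhood containments hold in the correct direction (closed vs.\ open) and that the residual isolated vertices genuinely make the unwanted summand contractible rather than merely simplifying it. A secondary subtlety is confirming that the identification $\LE_x(G)-\{a,b,z\}\cong G$ respects \emph{all} edges among $N_G(x)$ and the rest of $G$, i.e.\ that the transformation only rerouted edges through $y$ without introducing or deleting any edge internal to $V\setminus\{x\}$; this is immediate from the definition of $\LE_x$ but should be stated explicitly so the isomorphism is unambiguous.
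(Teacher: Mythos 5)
Your argument is correct and follows essentially the same route as the paper: reduce to the decomposition $(N_G(x),\emptyset)$ via Lemma~\ref{lem:lozin+stable} and then apply Theorem~\ref{thm:hom-induction} once to the dangling end of the attached path. The only difference is cosmetic: the paper uses the open-neighborhood containment $N(z)\subseteq N(a)$ to delete $a$ and recognize the result as $G\cup K_2$ (whose independence complex is $\Sigma\IE(G)$), whereas you use the closed-neighborhood containment $N[z]\subseteq N[b]$ to obtain a wedge whose deletion summand is contractible because $z$ becomes isolated --- both are one-step applications of the same theorem.
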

\begin{proof}
In view of Lemma~\ref{lem:lozin+stable}, it is sufficient to show that $\IE(\LE_x(G;N_G(x),\emptyset))\simeq \Sigma(\IE(G))$.
In such a case, we set $\RE_x(G)=\LE_x(G;N_G(x),\emptyset)$ and note that $N_{\RE_x(G)}(z)\subseteq N_{\RE_x(G)}(a)$ so that
$\IE(\RE_x(G))$ is homotopy equivalent to $\IE(\RE_x(G)-a)$, while the latter graph is clearly isomorphic to 
$G\cup K_2$, where $K_2$ is induced by the edge $(b,z)$. It then follows that $\IE(\RE_x(G))\simeq \Sigma(\IE(G))$
as required. 
\end{proof}

\begin{lem}\label{lem:lozin+reg}
Let $G=(V,E)$ be a graph and let $x\in V$ be given. Then $\reg(\LE_x(G;Y,Z))=\reg(\LE_x(G;Y',Z'))$
for any two distinct decompositions $\{Y,Z\}$ and $\{Y',Z'\}$ of $N_G(x)$. 
\end{lem}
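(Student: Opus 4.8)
The plan is to mirror the argument of Lemma~\ref{lem:lozin+stable}, replacing each use of Theorem~\ref{thm:isolating} (which only preserves homotopy type) by Proposition~\ref{prop:reg+isolating} (which preserves the regularity). As there, it suffices to handle a single reassignment of a vertex: fixing $u\in Z$, I will show
\[
\reg(\LE_x(G;Y,Z))=\reg(\LE_x(G;Y\cup\{u\},Z\setminus\{u\})).
\]
Iterating this equality moves the elements of $Z$ into $Y$ one at a time, reducing any decomposition to the distinguished one $(N_G(x),\emptyset)$; since the regularity is preserved at each step, it is the same for every decomposition of $N_G(x)$.

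Recall that moving $u$ from $Z$ to $Y$ is realized by the successive isolating operations $\Add(u,y;b)$ and $\Del(u,z;a)$. The point is that each of these meets the extra hypothesis of Proposition~\ref{prop:reg+isolating}, namely that the isolated vertex possesses a neighbor of degree two. First I set $H:=\LE_x(G;Y,Z)+uy$ and check that $uy$ is isolating with respect to $b$: since $a\in N_H[y]$ and $z\in N_H[u]$ (as $u\in Z$), both neighbors of $b$ are deleted, so $b$ is isolated in $H-N_H[uy]$. Here $b$ has the neighbor $a$, whose only neighbors are $y$ and $b$; thus $a$ has degree two, and Proposition~\ref{prop:reg+isolating} gives $\reg(H)=\reg(H-uy)=\reg(\LE_x(G;Y,Z))$.

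Next I keep $H=\LE_x(G;Y,Z)+uy$ and delete the edge $uz$. Now $uz$ is isolating with respect to $a$: since $y\in N_H[u]$ (the edge $uy$ having just been added) and $b\in N_H[z]$, both neighbors of $a$ are removed, so $a$ is isolated in $H-N_H[uz]$. The degree-two condition holds again, this time through the neighbor $b$ of $a$, whose only neighbors are $a$ and $z$. Proposition~\ref{prop:reg+isolating} yields $\reg(H)=\reg(H-uz)$, and since $H-uz=\LE_x(G;Y\cup\{u\},Z\setminus\{u\})$, chaining the two equalities produces the displayed identity.

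The only content beyond Lemma~\ref{lem:lozin+stable} is the verification of the degree-two hypothesis, and this is precisely where the interior of the attached four-path does its work: the vertices $a$ and $b$ have degree two by construction, and they are adjacent to the vertices $b$ and $a$ that become isolated at the two steps. Consequently the homotopy equivalence of Lemma~\ref{lem:lozin+stable} is upgraded to an equality of regularities almost for free; I expect the only mild obstacle to be the routine bookkeeping of confirming, at each step, that the relevant endpoint of the added or deleted edge indeed lies in the closed neighborhood used to isolate $a$ or $b$.
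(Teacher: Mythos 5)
Your proof is correct and follows essentially the same route as the paper: reduce to moving a single vertex $u$ from $Z$ to $Y$, realize this as the isolating operations $\Add(u,y;b)$ followed by $\Del(u,z;a)$, and apply Proposition~\ref{prop:reg+isolating} at each step, using that $a$ and $b$ are the required degree-two neighbors of the isolated vertices $b$ and $a$ respectively. The only difference is that you spell out the verification of the isolating and degree-two hypotheses, which the paper leaves implicit.
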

\begin{proof}
We proceed as in the proof of Lemma~\ref{lem:lozin+stable}. So, let a decomposition $N_G(x)=Y\cup Z$ and a vertex $u\in Z$ be given.
We claim that $\reg(\LE_x(G;Y,Z))=\reg(\LE_x(G;Y\cup \{u\},Z\backslash \{u\})$. If we set $H:=\LE_x(G;Y,Z)\cup (u,y)$, then
the edge $(u,y)$ is an isolating edge of $H$ with respect to the vertex $b$, while $b$ has a neighbor $a$ of degree two.
Therefore, Proposition~\ref{prop:reg+isolating} applies, that is, $\reg(H)=\reg(\LE_x(G;Y,Z))$.
On the other hand, $(u,z)$ is also an isolating edge in $H$ with respect to $a$, and the vertex $a$ has a degree two neighbor, namely
$b$. Thus we have $\reg(H)=\reg(H-(u,z))=\reg(\LE_x(G;Y\cup \{u\},Z\backslash \{u\})$. So the claim follows.
\end{proof}

%%%%%%%%%%%%%%%%%%???????
\begin{proof}[{\bf Proof of Theorem~\ref{thm:lozin+reg}}]
Given a subset $S\subseteq V$. Assume first that $x\notin S$. If we define $S':=S\cup \{a,b\}$,
it follows that $\LE_x(G)[S']\cong G[S]\cup K_2$, where $K_2$ is induced by the edge $(a,b)$.
Therefore, the complex $\IE(\LE_x(G)[S'])$ is homotopy equivalent to the suspension of 
$\IE(G[S])$.

Suppose now that $x\in S$. In such a case, we consider $S':=(S\backslash \{x\})\cup \{y,a,b,z\}$, and write 
$H:=G[S]$ and $H':=\LE_x(G)[S']$. Note that the Lozin's transform $\LE_x(H)$ of $H$ with respect to the partition $N_H(x)=(S\cap Y)\cup (S\cap Z)$
is exactly isomorphic to $H'$. Therefore, we have $\IE(H')\simeq \Sigma(\IE(H))$ by Proposition~\ref{prop:lozin+stable}, 
which implies that $\reg(\LE_x(G))\geq \reg(G)+1$.

For the converse, by Lemma~\ref{lem:lozin+reg}, it is sufficient to show that $\reg(\LE_x(G;N_G(x),\emptyset))\leq \reg(G)+1$.
So,  set $\LE_x(G)=\LE_x(G;N_G(x),\emptyset)$. Then we have
\begin{equation*}
\reg (\LE_x(G))\leq \max\{\reg (\LE_x(G)-a), \reg(\LE_x(G)-N_{\LE_x(G)}[a])+1\}.
\end{equation*}
by Corollary~\ref{cor:induction-sc}. However, the graph $\LE_x(G)-a$ is isomorphic to 
$G\cup (b,z)$ so that $\reg(\LE_x(G)-a)=\reg(G)+1$. Moreover, the graph
$\LE_x(G)-N_{\LE_x(G)}[a]$ is isomorphic to $(G-x)\cup \{z\}$ in which $z$ is an isolated vertex.
It means that $\reg(\LE_x(G)-N_{\LE_x(G)}[a])=\reg(G-x)\leq \reg(G)$. Therefore, we conclude that
$\reg(\LE_x(G))\leq \reg(G)+1$ as claimed.
\end{proof}

A special case of Lozin's transformation can be obtained by taking one of the sets in the partition of $N_G(x)$ 
as a singleton. In other words, for a given $u\in N_G(x)$, we consider the operation $\LE_x(G;\{u\},N_G(x)\backslash \{u\})$.
Observe that the resulting graph can be obtained from $G$ by replacing the edge $(u,x)$ in $G$ by a path $u-y-a-b-x$,
which is called the {\it triple subdivision} of $G$ with respect to the edge $(u,x)$. 
Following Theorem~\ref{thm:lozin+reg}, we may readily describe the effect of a triple subdivision on regularity.

\begin{cor}\label{cor:lozin+edgesubdiv}
Let $G$ be a graph and let $e=(u,v)$ be an edge. If $\LE(G;e)$ is the graph obtained from $G$ by the triple subdivision of $e$,
then $\reg(\LE(G;e))=\reg(G)+1$. 
\end{cor}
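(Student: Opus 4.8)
The plan is to recognize Corollary~\ref{cor:lozin+edgesubdiv} as a direct instance of Theorem~\ref{thm:lozin+reg}, after matching the two notations. First I would observe that the triple subdivision $\LE(G;e)$ of an edge $e=(u,v)$, which replaces $e$ by the path $u-y-a-b-v$, is precisely the Lozin transform $\LE_v(G;\{u\},N_G(v)\backslash\{u\})$ with respect to the endpoint $v$ and the partition that places $u$ alone in one part and all remaining neighbors of $v$ in the other. To see this I would unwind the definition of the Lozin transform: step (ii) deletes $v$ together with its incident edges; step (iii) attaches a fresh $P_4$ on $\{y,a,b,z\}$; and step (iv) joins $y$ to the singleton part $\{u\}$ and $z$ to the remaining part $N_G(v)\backslash\{u\}$. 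Relabeling the vertex $z$ of the $P_4$ as $v$ (the endpoint that retains all its old neighbors except $u$) makes the attached path read $u-y-a-b-v$, which is exactly the triple subdivision. Thus $\LE(G;e)\cong\LE_v(G;\{u\},N_G(v)\backslash\{u\})$ as graphs, and in particular they have isomorphic independence complexes.

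Once this identification is in place, the conclusion is immediate. By Theorem~\ref{thm:lozin+reg} applied to the vertex $v$, we have $\reg(\LE_v(G))=\reg(G)+1$; and since the regularity of a graph depends only on its independence complex (indeed only on the isomorphism type of the graph), the isomorphism $\LE(G;e)\cong\LE_v(G;\{u\},N_G(v)\backslash\{u\})$ yields $\reg(\LE(G;e))=\reg(\LE_v(G))=\reg(G)+1$, as required. Here one may invoke Lemma~\ref{lem:lozin+reg} to note that the value $\reg(\LE_v(G;Y,Z))$ is independent of the chosen decomposition $\{Y,Z\}$ of $N_G(v)$, so writing $\LE_v(G)$ for the transform at $v$ is unambiguous; the singleton partition $\{u\}\cup(N_G(v)\backslash\{u\})$ is simply one admissible choice, which the remark preceding the corollary already singles out.

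The only genuine step requiring care is the first one, namely the bookkeeping of the identification $\LE(G;e)\cong\LE_v(G;\{u\},N_G(v)\backslash\{u\})$. One must be attentive to which endpoint of $e$ plays the role of the transformed vertex $x$ in the definition and to the relabeling of the $P_4$-vertex adjacent to the larger part. I expect this to be the main (and essentially only) obstacle, and it is a routine verification rather than a substantive difficulty: the definition of triple subdivision in the text is explicitly phrased so as to coincide with the singleton-part special case of the Lozin transform. Everything else is a formal appeal to Theorem~\ref{thm:lozin+reg}, so no new homological or combinatorial input beyond what has already been established is needed.
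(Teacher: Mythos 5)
Your proposal is correct and matches the paper's own argument: the text preceding the corollary makes exactly the identification of the triple subdivision of $e=(u,v)$ with the singleton-partition Lozin transform $\LE_v(G;\{u\},N_G(v)\backslash\{u\})$ (relabeling the $P_4$-vertex $z$ as $v$), and then the corollary follows directly from Theorem~\ref{thm:lozin+reg}. Your extra remark that Lemma~\ref{lem:lozin+reg} makes the choice of partition immaterial is harmless but not needed, since you apply the theorem to one specific partition anyway.
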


%%%%%%%%%%%%%%%%%%%%%%%%%%%%%%%%%%%%%%%%%%%%%%%%%%%%%%%%%%%%%%%%%%%%%%%%%%%%%%%%%%%%%%%%%%%%%%%%%%%%%%%%%%%

\section{Bounding the regularity of graphs}

In this section, we provide various upper bounds on the regularity of graphs as an application of Theorem~\ref{thm:lozin+reg}.
We first recall that a $k$-(vertex) coloring of a graph $G=(V,E)$ is a surjective mapping $\kappa\colon V\to\{1,2,\ldots,k\}$
such that if $uv\in E$, then $\kappa(u)\neq \kappa(v)$, and the chromatic number $\chi(G)$ of $G$ is the least integer $d$
for which $G$ admits a $d$-coloring. Moreover, a graph $G$ is called \emph{perfect} if $\chi(G[A])=\omega(G[A])$ for any
$A\subseteq V$.
\begin{defn}
For a given graph $G$ with at least one edge, we define a graph $G^*$, whose vertices are the edges of $G$, that is, $V(G^*)=E(G)$, and
if $e$ and $f$ are two edges in $G$, then $ef\in E(G^*)$ if and only if $G[V(\{e,f\})]\cong 2K_2$.
\end{defn}
We remark that the graph $G^*$ constructed here is exactly the complement of that introduced in~\cite{KC}.
Furthermore, the induced matching number of $G$ equals to the clique number of $G^*$, i.e., $\im(G)=\omega(G^*)$,
and under suitable restrictions, we can say more:

\begin{thm}\label{thm:chromatic-cochord}
If $G$ is a $(C_3,C_5)$-free graph, then $\chi(G^*)=\cd(G)$.
\end{thm}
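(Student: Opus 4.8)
The plan is to prove the two inequalities $\chi(G^*) \geq \cd(G)$ and $\chi(G^*) \leq \cd(G)$ separately, interpreting a proper coloring of $G^*$ as a partition of the edge set $E(G)$ into color classes and relating each class to a cochordal subgraph. The crucial dictionary is as follows: a color class in a proper coloring of $G^*$ is an independent set of $G^*$, which by the definition of $G^*$ means a set $F \subseteq E(G)$ of edges \emph{no two of which} induce a $2K_2$ in $G$. I would first record the elementary observation that $\im(G) = \omega(G^*)$ stated just above, since it signals that the whole strategy hinges on translating ``no induced $2K_2$'' into a cochordality statement.

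First I would show $\chi(G^*) \geq \cd(G)$. Given a proper $\chi(G^*)$-coloring, the color classes partition $E(G)$ into sets $F_1,\ldots,F_t$ with $t = \chi(G^*)$, each $F_i$ being an independent set in $G^*$. For each $i$, let $H_i$ be the subgraph of $G$ on vertex set $V$ with edge set $F_i$. By definition of $G^*$, no two edges of $H_i$ induce a $2K_2$ \emph{in $G$}; the hard part here is to leverage the $(C_3,C_5)$-free hypothesis to promote this ``no induced $2K_2$ in $G$'' condition into the statement that $H_i$ is genuinely cochordal, i.e.\ that $\overline{H_i}$ is chordal. The $C_3$-freeness and $C_5$-freeness of $G$ are exactly what rule out the obstructions (induced odd holes and their complements) that could otherwise appear when one passes to the complement of a $2K_2$-free edge set. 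Once each $H_i$ is shown cochordal, the $H_i$ cover $E(G)$, giving $\cd(G) \leq t = \chi(G^*)$.

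For the reverse inequality $\cd(G) \leq \chi(G^*)$ direction, I would instead run it as $\chi(G^*) \leq \cd(G)$: take a minimum cochordal cover $H_1,\ldots,H_r$ of $G$ with $r = \cd(G)$, and use it to manufacture a proper coloring of $G^*$. Assign to each edge $e \in E(G)$ the index $i$ of some cochordal subgraph $H_i$ containing it. I must check this is a proper coloring of $G^*$: if $e,f$ receive the same color $i$, then both lie in the cochordal graph $H_i$, and I would argue that a cochordal graph cannot contain an induced $2K_2$ (equivalently, its complement, being chordal, is $C_4$-free), so $e$ and $f$ are \emph{not} adjacent in $G^*$. This shows the coloring is proper and yields $\chi(G^*) \leq r = \cd(G)$. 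This direction is the cleaner one and does not even need the $(C_3,C_5)$-free hypothesis.

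The main obstacle is plainly the forward direction: establishing that a $2K_2$-free subset of edges of a $(C_3,C_5)$-free graph spans a cochordal subgraph. I expect this to require a careful structural argument on the complement, ruling out induced cycles $C_k$ with $k \geq 4$ in $\overline{H_i}$ by translating each such forbidden cycle back into a forbidden configuration in $G$ (either an induced $2K_2$ among the edges of $F_i$, or a $C_3$ or $C_5$ in $G$, each of which is excluded by hypothesis). I would also want to confirm that the edgeless and single-edge degenerate cases cause no trouble, and double-check the convention that a graph with no edges has cochordal cover number and chromatic number zero so that the identity holds trivially at the base.
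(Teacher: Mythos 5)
Your overall architecture is exactly the paper's: prove $\chi(G^*)\le\cd(G)$ by coloring each edge with the index of a cochordal cover member containing it, and prove $\cd(G)\le\chi(G^*)$ by showing that the color classes of a proper coloring of $G^*$ span cochordal subgraphs. Your easy direction is complete and correct (and you are right that it needs no hypothesis on $G$ beyond having an edge; the paper notes the same). The problem is the other direction: you explicitly defer the one claim that carries all the content of the theorem, namely that a color class $F_i$ of a proper coloring of $G^*$ spans a cochordal subgraph $H_i$ when $G$ is $(C_3,C_5)$-free. Saying you ``expect this to require a careful structural argument'' identifies the obstacle but does not clear it, so as written the proposal does not prove the theorem.

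For the record, the paper disposes of this claim in three lines, using the translation you anticipate (an induced $C_r$, $r\ge 4$, in $\overline{H_i}$ is an induced $\overline{C_r}$ in $H_i$): for $r\ge 6$ the graph $\overline{C_r}$ contains a triangle, impossible inside the triangle-free $G$; for $r=5$, $\overline{C_5}=C_5$, and an induced $C_5$ in $H_i$ would have to acquire a chord in the $C_5$-free graph $G$, which creates a triangle; for $r=4$, $\overline{C_4}=2K_2$, which is excluded because $F_i$ is independent in $G^*$. You should carry out these three cases explicitly. Be careful with the last one: independence of $F_i$ in $G^*$ only says that no two edges of $F_i$ induce a $2K_2$ \emph{in $G$}, whereas cochordality of $H_i$ requires that no two of its edges induce a $2K_2$ \emph{in $H_i$}; two disjoint edges of $F_i$ joined in $G$ only by an edge lying outside $F_i$ are exactly the configuration you must rule out (or argue around, e.g.\ by choosing the coloring or the subgraphs $H_i$ suitably), and your sketch as phrased does not address it.
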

\begin{proof}
Assume that $\chi(G^*)=k$, and let $\kappa\colon E(G)\to [k]$ be a proper vertex coloring
of $G^*$. We consider color classes $E_i=\kappa^{-1}(i)$ and denote by $G_i$, the subgraph of $G$ with
$E(G_i)=E_i$ for any $i\in [k]$. We then claim that each $G_i$ is a cochordal subgraph of $G$. Observe first that
the graph $G_i$ can not contain $\overline{C_r}$ for any $r\geq 6$, since $G$ is triangle-free. Moreover, if $G_i$
contains $\overline{C_5}=C_5$, then $G$ must contain a chord that in turn creates a triangle in $G$ which is not possible.
On the other hand, since the set of edges of $G_i$ corresponds to a color class in $G^*$, it is necessarily $\overline{C_4}$-free.
Therefore, the family $\{G_1,\ldots,G_k\}$ is a cochordal edge cover of $G$; hence, $\cd(G)\leq k$.

Suppose now that $\cd(G)=n$, and let $\{W_1,\ldots,W_n\}$ be a cochordal edge covering family of $G$. We then define
$\mu\colon E(G)\to [n]$ by $\mu(e):=\min \{j\in [n]\colon e\in E(W_j)\}$, and claim that $\mu$ is a proper coloring of $G^*$.
For this, if $ef\in E(G^*)$ for some $e,f\in E(G)$, there exists no $s\in [n]$ for which $e,f\in E(W_s)$, since $W_s$ is cochordal.
We therefore have $\mu(e)\neq \mu(f)$. This proves that $\chi(G^*)\leq n$.
\end{proof}

By Theorem~\ref{thm:chromatic-cochord}, we note that the inequality $\chi(G^*)\leq \cd(G)$ holds for any graph $G$ containing 
at least one edge. Furthermore, the followings are also immediate from Theorem~\ref{thm:chromatic-cochord}:
\begin{cor}\label{cor:col-bound}
If $G$ is a $(C_3,C_5)$-free graph, then $\omega(G^*)\leq reg(G)\leq \chi(G^*)$.
\end{cor}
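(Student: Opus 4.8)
The plan is to derive Corollary~\ref{cor:col-bound} directly from the two results immediately preceding it, namely Theorem~\ref{thm:im-reg-cd} and Theorem~\ref{thm:chromatic-cochord}, together with the stated identity $\im(G)=\omega(G^*)$. Since the corollary is explicitly advertised as ``immediate from Theorem~\ref{thm:chromatic-cochord},'' I expect no genuine obstacle here; the entire content is a chain of substitutions into inequalities that have already been established. The only point requiring a moment's care is tracking the hypothesis: Theorem~\ref{thm:chromatic-cochord} is stated for $(C_3,C_5)$-free graphs, which is exactly the hypothesis of the corollary, so all cited results apply verbatim.

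Concretely, I would proceed as follows. First, I would invoke Theorem~\ref{thm:im-reg-cd}, which gives the universal sandwich $\im(G)\leq \reg(G)\leq \cd(G)$ valid for every graph. Next, for the left inequality I would substitute the identity $\im(G)=\omega(G^*)$ (recorded in the remark just before Theorem~\ref{thm:chromatic-cochord}) to rewrite $\im(G)\leq \reg(G)$ as $\omega(G^*)\leq \reg(G)$. For the right inequality I would use the hypothesis that $G$ is $(C_3,C_5)$-free to apply Theorem~\ref{thm:chromatic-cochord}, which yields the \emph{equality} $\chi(G^*)=\cd(G)$; substituting this into $\reg(G)\leq \cd(G)$ gives $\reg(G)\leq \chi(G^*)$. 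Concatenating the two rewritten inequalities produces $\omega(G^*)\leq \reg(G)\leq \chi(G^*)$, which is precisely the assertion.

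It is worth emphasizing one structural remark that makes the statement natural rather than coincidental. For any graph $H$ one always has $\omega(H)\leq \chi(H)$, since a proper coloring must assign distinct colors to the vertices of any clique. Thus the corollary can be read as saying that, for $(C_3,C_5)$-free $G$, the regularity $\reg(G)$ is squeezed between the clique number and the chromatic number of the auxiliary graph $G^*$, refining the trivial bound $\omega(G^*)\leq \chi(G^*)$ by inserting $\reg(G)$ in the middle. This perspective also clarifies why the two flanking quantities are the correct ones: they are exactly the graph-theoretic incarnations of $\im(G)$ and $\cd(G)$ under the dictionary supplied by $G^*$.

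If anything at all could be called a subtlety, it is only the edge case where $G$ has no edges: then $G^*$ is undefined (the definition requires at least one edge), but in that degenerate situation $\reg(G)=\im(G)=0$ and there is nothing to prove, so one may harmlessly assume $E(G)\neq\emptyset$ throughout, matching the standing convention under which $G^*$ was introduced. With that understood, the proof is a two-line substitution and requires no induction, no homological machinery, and no appeal to the Lozin transformation developed earlier in the paper.
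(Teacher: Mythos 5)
Your proposal is correct and is exactly the derivation the paper intends: the corollary is stated as immediate from Theorem~\ref{thm:chromatic-cochord}, and the route is to combine $\im(G)\leq\reg(G)\leq\cd(G)$ from Theorem~\ref{thm:im-reg-cd} with the identity $\im(G)=\omega(G^*)$ and the equality $\chi(G^*)=\cd(G)$ supplied by Theorem~\ref{thm:chromatic-cochord}. Your handling of the edgeless case and the hypothesis tracking is careful but adds nothing beyond what the paper leaves implicit.
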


\begin{cor}\label{cor:perfect}
If $G$ is a $(C_3,C_5)$-free graph such that $G^*$ is a perfect graph, then $\reg(G)=\im(G)$.
\end{cor}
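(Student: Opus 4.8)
The plan is to combine the two-sided bound of Corollary~\ref{cor:col-bound} with the defining property of perfect graphs. Since $G$ is $(C_3,C_5)$-free, Corollary~\ref{cor:col-bound} gives $\omega(G^*)\leq \reg(G)\leq \chi(G^*)$, so the entire argument reduces to showing that the outer two quantities coincide, namely $\omega(G^*)=\chi(G^*)$, and then identifying $\omega(G^*)$ with $\im(G)$.

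First I would invoke the hypothesis that $G^*$ is perfect. By the very definition of a perfect graph recalled just before Theorem~\ref{thm:chromatic-cochord}, we have $\chi(G^*[A])=\omega(G^*[A])$ for every $A\subseteq V(G^*)$; taking $A=V(G^*)$ yields $\chi(G^*)=\omega(G^*)$ directly. Next I would recall the remark following the definition of $G^*$, where it is observed that the induced matching number of $G$ equals the clique number of $G^*$, that is $\im(G)=\omega(G^*)$. Stringing these equalities together with the sandwich inequality forces $\im(G)=\omega(G^*)=\reg(G)=\chi(G^*)$, and in particular $\reg(G)=\im(G)$, which is exactly the claim.

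I do not anticipate a genuine obstacle here, since the statement is essentially a formal consequence of machinery already in place: the real content lies in Theorem~\ref{thm:chromatic-cochord} (the identity $\chi(G^*)=\cd(G)$ for $(C_3,C_5)$-free graphs) and in Theorem~\ref{thm:im-reg-cd} (the bounds $\im(G)\leq\reg(G)\leq\cd(G)$), both of which are proved earlier. The only point requiring minor care is the degenerate case where $G$ has no edges at all, so that $G^*$ is undefined; in that situation $\reg(G)=\im(G)=0$ trivially and the conclusion holds vacuously, so I would either restrict to graphs with at least one edge (matching the convention in the definition of $G^*$) or dispose of the edgeless case in a single sentence. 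The main conceptual step to make explicit is simply that perfection of $G^*$ is precisely what collapses the gap between the Katzman lower bound and the cochordal-cover upper bound.
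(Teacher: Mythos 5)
Your proposal is correct and is exactly the argument the paper intends: the sandwich $\omega(G^*)\leq\reg(G)\leq\chi(G^*)$ from Corollary~\ref{cor:col-bound}, collapsed by perfection of $G^*$ and the identity $\im(G)=\omega(G^*)$. The remark about the edgeless case is a harmless extra precaution not present in the paper.
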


Observe that if $G$ satisfies the conditions of Corollary~\ref{cor:perfect}, it must be $C_k$-free for any $k\geq 7$. It will be
interesting to find a full characterization of such graphs.

One of the important problem in graph coloring theory is to provide an upper bound on the chromatic number of a graph in terms of its 
clique number. Under certain conditions on the source graphs, we next show that such a bound is possible for their associated graphs.
For any given integer $m\geq 1$, we denote by $\LE(m)$, the class of graphs contained in $\XE_{3m+3}\cap \YE_{3m+3}\cap \ZE_3\cap \B$.

\begin{thm}\label{thm:bound-m}
If $G\in \LE(m)$, then $\chi(G^*)\leq \frac{m+1}{m}\omega(G^*)$.
\end{thm}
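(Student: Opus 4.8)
The plan is to pass from the coloring statement to a covering statement and then to a matching statement. Since $G$ is bipartite it is $(C_3,C_5)$-free, so Theorem~\ref{thm:chromatic-cochord} gives $\chi(G^*)=\cd(G)$, while $\omega(G^*)=\im(G)$ by the remark following the definition of $G^*$. Hence the assertion is equivalent to
\begin{equation*}
\cd(G)\le \frac{m+1}{m}\im(G).
\end{equation*}
I would establish this last inequality entirely inside the class $\LE(m)$, using only that $G$ is bipartite (so triangle-free), has maximum degree three, girth at least $3m+4$, and is $(H_1,\dots,H_{3m+3})$-free.

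First I would bound $\cd(G)$ by an edge-domination parameter. If $M_0$ is any maximal matching of $G$, then for each $f=uv\in M_0$ the set $B_f$ of all edges of $G$ incident to $u$ or $v$ is a double star; being triangle-free and $2K_2$-free (any two of its edges are linked by the central edge $uv$), $B_f$ is cochordal by the argument used in the proof of Theorem~\ref{thm:chromatic-cochord}. Maximality of $M_0$ forces every edge of $G$ to meet $V(M_0)$, so $\{B_f\colon f\in M_0\}$ is a cochordal cover of $G$ and $\cd(G)\le |M_0|$. Choosing $M_0$ to be a minimum maximal matching yields $\cd(G)\le \gamma'(G)$, where $\gamma'(G)$ denotes the minimum size of a maximal matching. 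It therefore suffices to prove $\gamma'(G)\le \frac{m+1}{m}\im(G)$.

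To compare $\gamma'(G)$ with $\im(G)$ I would fix a minimum maximal matching $M_0$ and form its \emph{conflict graph} $\Phi$: the vertices of $\Phi$ are the edges of $M_0$, and $f,g\in M_0$ are adjacent precisely when some edge of $G$ joins $V(f)$ to $V(g)$. An independent set of $\Phi$ is exactly an induced matching of $G$, so $\im(G)\ge \alpha(\Phi)=|M_0|-\tau(\Phi)\ge |M_0|-|E(\Phi)|$, whence $\gamma'(G)=|M_0|\le \im(G)+|E(\Phi)|$. Since the maximum degree of $G$ is three, each vertex of $\Phi$ has degree at most four (each endpoint of a matching edge has at most two further neighbours, lying in distinct matching edges), so $\Phi$ is sparse; the whole theorem now reduces to the single estimate
\begin{equation*}
|E(\Phi)|\le \tfrac1m\,\im(G).
\end{equation*}

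The hard part will be this last estimate, and it is where the girth and the $H_n$-freeness must be spent. The intended mechanism is a charging argument. A conflicting pair $f,g\in M_0$ is joined by an edge $e\notin M_0$ and sits at the centre of a ball of radius about $m$ which, because every cycle of $G$ has length at least $3m+4$, is a tree of maximum degree three. Minimality of $M_0$ together with $(H_1,\dots,H_{3m+3})$-freeness should force this tree to carry an induced path long enough to contain an induced matching of size at least $m$ that is ``private'' to the conflict. As distinct conflicts can be separated into disjoint radius-$m$ balls (again by the girth hypothesis), these private matchings can be taken pairwise non-conflicting and assembled into a single induced matching of size at least $m\,|E(\Phi)|$, giving $|E(\Phi)|\le \frac1m\im(G)$. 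Checking that the forbidden subgraphs $H_1,\dots,H_{3m+3}$ genuinely preclude the branching configurations that would shorten these induced paths, and that the charging can be organised globally without double counting, is the main obstacle. As a reassurance that the target ratio is not tight, the cycle $C_{3s+1}$ with $s\ge m+1$ already satisfies $\gamma'/\im=(s+1)/s\le (m+2)/(m+1)<\frac{m+1}{m}$, so the charging has room to spare.
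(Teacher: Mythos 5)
Your reduction is a genuinely different route from the paper's, and its first two steps are sound: passing to $\cd(G)\le\frac{m+1}{m}\im(G)$ via Theorem~\ref{thm:chromatic-cochord}, and the bound $\cd(G)\le\gamma'(G)$ by covering with the double stars of a minimum maximal matching, are both correct. The problem is that everything then hangs on the single estimate $|E(\Phi)|\le\frac{1}{m}\im(G)$, which you do not prove (you explicitly flag the charging argument as the main obstacle), and which is in fact \emph{false} for graphs in $\LE(m)$. Take $m$ even and $G=C_{3m+4}$; this graph is bipartite, $2$-regular, has girth $3m+4$ and contains no $H_n$, so $G\in\LE(m)$. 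Any minimum maximal matching $M_0$ has $|M_0|=m+2$ edges covering $2m+4$ vertices, leaving $m$ vertices uncovered; since no two uncovered vertices can be adjacent, each of the $m+2$ gaps between consecutive matching edges contains at most one uncovered vertex, so at least two gaps are empty and $|E(\Phi)|\ge 2$. But $\im(C_{3m+4})=m+1$, so the required bound reads $2\le\frac{m+1}{m}$, which fails for every $m\ge 2$. The theorem itself still holds for this example only because the chain $|M_0|\le\im(G)+|E(\Phi)|$ is lossy; the intermediate target you reduced to is simply too strong, so the proposed charging argument cannot be completed as stated.

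For contrast, the paper argues by induction on $|E(G)|$ without ever leaving the coloring formulation: using the girth bound and $H_{3m+3}$-freeness it extracts an induced path $P$ of order $3m+3$ all of whose inner vertices have degree two in $G$, shows that every maximum clique of $G^*$ meets $E(P)$ in at least $m$ vertices (so $\omega(G^*-E(P))\le\omega(G^*)-m$), and colors $G^*$ by combining an inductive coloring of $G^*-E(P)$ with the $m+1$ colors needed for $P^*$. If you want to salvage your approach, you would need either a weaker, provable bound on $|E(\Phi)|$ that still closes the arithmetic, or a sharper lower bound on $\alpha(\Phi)$ than $|V(\Phi)|-|E(\Phi)|$; as written, the argument has a gap that is not merely technical.
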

\begin{proof}
We proceed by an induction on the size of $G$. We first note that if $G$ is a forest, then $\im(G)=\cd(G)$ by 
Proposition~\ref{prop:chordal-reg}; hence,
\begin{equation*}
\omega(G^*)=\im(G)=\cd(G)=\chi(G^*)
\end{equation*}
by Theorem~\ref{thm:chromatic-cochord}. So, we may assume that $G$ contains at least one induced cycle,
say $C$, of order at least $3m+4$.

{\bf Claim 1.} There exists an induced path of order $3m+3$ in $G$ such that all of its inner vertices are of degree two in $G$.

\emph{Proof of Claim 1.} If the induced cycle $C$ contains at most one vertex of degree three, we can clearly construct such a path on $C$. 
On the other hand, if $x,y\in V(C)$ are two vertices having degree three in $G$, then all vertices (except possibly $x$ and $y$) on the shortest 
path in $C$ connecting $x$ and $y$ must have degree two, since $G$ is $H_{3m+3}$-free. Moreover, the length of such a path is at least $3m+2$ 
which proves the claim.

We let $P$ be a such path in $G$ and denote by $e_1,e_2,\ldots,e_{3m+2}$, the edges of $P$.

{\bf Claim 2.} If $Q$ is a maximum clique of $G^*$, then $|Q\cap E(P)|\geq m$.

\emph{Proof of Claim 2.} We simply note that the intersection $Q\cap E(P)$ is minimized exactly when $Q\cap N_{G^*}(e_1)\neq \emptyset$ and
$Q\cap N_{G^*}(e_{3m+2})\neq \emptyset$. However, it then follows that the intersection $Q\cap \{e_3,e_4,\ldots, e_{3m}\}$ is at least $m$.

As a result, we have $\omega(G^*-E(P))\leq \omega(G^*)-m$ by Claim 2, and also note that $\chi(P^*)=m+1$ 
by Proposition~\ref{prop:chordal-reg}. Therefore, we conclude that 

\begin{align*}
\chi(G^*)&\leq \chi(G^*-E(P))+m+1,\\
&\leq (\frac{m+1}{m})\omega(G^*-E(P))+m+1,\\
&\leq (\frac{m+1}{m})(\omega(G^*)-m)+m+1=(\frac{m+1}{m})\omega(G^*),
\end{align*}
where the second inequality follows from the induction.
\end{proof}

\begin{cor}\label{cor:bound-m}
If $G\in \LE(m)$, then $\reg(G)\leq \frac{m+1}{m}\im(G)$.
\end{cor}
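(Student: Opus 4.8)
The plan is to derive Corollary~\ref{cor:bound-m} directly from Theorem~\ref{thm:bound-m} by threading the regularity between the two invariants $\omega(G^*)$ and $\chi(G^*)$ that the theorem already controls. First I would recall that, by definition of the graph $G^*$, the induced matching number of the source graph equals the clique number of the associated graph, i.e. $\im(G)=\omega(G^*)$; this identity is stated explicitly in the paragraph following the definition of $G^*$ and requires no further argument.

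\medskip

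The key observation is that any graph $G\in \LE(m)$ is in particular $(C_3,C_5)$-free, since by definition $\LE(m)\subseteq \XE_{3m+3}\cap \YE_{3m+3}\cap \ZE_3\cap \B$ and the class $\XE_{3m+3}$ consists of $(C_3,C_4,\ldots,C_{3m+3})$-free graphs, which for $m\geq 1$ forbids both $C_3$ and $C_5$. Consequently Corollary~\ref{cor:col-bound} applies and yields the upper bound $\reg(G)\leq \chi(G^*)$. Combining this with Theorem~\ref{thm:bound-m}, which gives $\chi(G^*)\leq \frac{m+1}{m}\omega(G^*)$, and substituting $\omega(G^*)=\im(G)$, I would obtain the chain
\begin{equation*}
\reg(G)\leq \chi(G^*)\leq \frac{m+1}{m}\omega(G^*)=\frac{m+1}{m}\im(G),
\end{equation*}
which is exactly the desired conclusion.

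\medskip

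There is no genuine obstacle here: the corollary is a formal consequence of stringing together three facts that are all available in the excerpt, namely the identity $\im(G)=\omega(G^*)$, the regularity bound $\reg(G)\leq \chi(G^*)$ for $(C_3,C_5)$-free graphs from Corollary~\ref{cor:col-bound}, and the chromatic bound of Theorem~\ref{thm:bound-m}. The only point that deserves a line of verification—and the closest thing to a subtlety—is confirming that membership in $\LE(m)$ does force the $(C_3,C_5)$-free hypothesis needed to invoke Corollary~\ref{cor:col-bound}; this is immediate from unwinding the definition of $\XE_{3m+3}$ for $m\geq 1$. Thus the proof is essentially a one-line substitution once that hypothesis check is recorded.
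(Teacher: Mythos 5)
Your proof is correct and matches the paper's intended argument: the paper states the corollary without proof as an immediate consequence of Theorem~\ref{thm:bound-m}, and the chain $\reg(G)\leq\chi(G^*)\leq\frac{m+1}{m}\omega(G^*)=\frac{m+1}{m}\im(G)$ via Corollary~\ref{cor:col-bound} is exactly the intended reasoning. Your check that $\LE(m)\subseteq\XE_{3m+3}$ forces $(C_3,C_5)$-freeness (which also follows from bipartiteness) is the right hypothesis verification.
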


\begin{defn}
For a given graph $G$ and an integer $m\geq 1$, we define its \emph{$\textrm{m}^{\textrm{th}}$-Lozin index} $l_m(G)$ by 
\begin{align*}
l_m(G):=\min\{k\geq 0\colon &\textrm{there\;exists\;a\;sequence}\;\LE_1,\LE_2,\ldots,\LE_k\;\textrm{of\;Lozin\;transformations}\\
&\textrm{such\;that}\;\LE_k(\LE_{k-1}(\ldots(\LE_1(G))\ldots )))\in \LE(m)\}.
\end{align*}
\end{defn}
We sometimes write $G_m$ for the graph obtained from $G$ by applying $k$ Lozin's transformations on $G$ such that $G_m\in \LE(m)$ where
$l_m(G)=k$. The following is an immediate consequence of Corollary~\ref{cor:bound-m} and Theorem~\ref{thm:lozin+reg}.
\begin{thm}\label{thm:bound-lozin-index}
For any graph $G$ and any integer $m\geq 1$, we have 
\begin{equation*}
\reg(G)\leq (\frac{m+1}{m})\im(G)+\frac{l_m(G)}{m}.
\end{equation*}
\end{thm}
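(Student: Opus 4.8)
The plan is to bolt together the three ingredients already in place: the effect of a single Lozin transformation on regularity (Theorem~\ref{thm:lozin+reg}), its effect on the induced matching number (Lemma~\ref{lem:lozin-inmatch}), and the ratio bound that holds once a graph lands in the target class (Corollary~\ref{cor:bound-m}). To set up, write $k:=l_m(G)$ and fix a sequence $\LE_1,\ldots,\LE_k$ of Lozin transformations realizing the index, producing a chain $G=G_0,G_1,\ldots,G_k=G_m$ in which each $G_i$ is obtained from $G_{i-1}$ by a single transformation and $G_m\in\LE(m)$. Such a sequence exists, and $l_m(G)$ is finite, because the second assertion of Lemma~\ref{lem:lozin-inmatch} guarantees that $\LE(m)$ is reachable from any graph.

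First I would record how the two invariants propagate along the chain. For each $i$, Theorem~\ref{thm:lozin+reg} gives $\reg(G_i)=\reg(G_{i-1})+1$ and Lemma~\ref{lem:lozin-inmatch} gives $\im(G_i)=\im(G_{i-1})+1$, so iterating yields $\reg(G_m)=\reg(G)+k$ and $\im(G_m)=\im(G)+k$. Since $G_m\in\LE(m)$, Corollary~\ref{cor:bound-m} applies to it, whence $\reg(G_m)\leq\frac{m+1}{m}\im(G_m)$. Substituting the two identities gives
\begin{equation*}
\reg(G)+k\leq\frac{m+1}{m}\bigl(\im(G)+k\bigr),
\end{equation*}
and subtracting $k$ from both sides, using $\frac{m+1}{m}-1=\frac{1}{m}$, leaves
\begin{equation*}
\reg(G)\leq\frac{m+1}{m}\im(G)+\frac{k}{m}=\frac{m+1}{m}\im(G)+\frac{l_m(G)}{m},
\end{equation*}
as claimed.

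There is no genuinely hard step here; the argument is a telescoping bookkeeping of two quantities that each increase by exactly one at every application of the operation, terminated by the ratio bound on the class $\LE(m)$. The only point that needs a moment's care is conceptual rather than computational: one must invoke Corollary~\ref{cor:bound-m} at the \emph{specific} terminal graph $G_m$ realizing the minimal index, and then read the resulting inequality back through the two exact identities, being careful that the coefficient of $k$ drops from $\frac{m+1}{m}$ to $\frac{1}{m}$ upon transposing the $+k$ from the left-hand side. Everything else follows formally.
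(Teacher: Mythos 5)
Your proposal is correct and is precisely the argument the paper intends: the paper states the theorem as an immediate consequence of Corollary~\ref{cor:bound-m} and Theorem~\ref{thm:lozin+reg}, and your telescoping of $\reg$ and $\im$ along a minimal sequence of Lozin transformations, followed by applying the ratio bound to the terminal graph $G_m$ and transposing $k$, is exactly that bookkeeping (with Lemma~\ref{lem:lozin-inmatch} correctly supplying the $\im(G_m)=\im(G)+k$ identity). No gaps.
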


In the rest of this section, as an another application of Theorem~\ref{thm:lozin+reg}, 
we show that the regularity of a graph is always less than or equal to the sum of its
induced matching and decycling numbers. To achieve this, we first verify that the whiskering of a graph with respect to a
decycling set results a vertex decomposable graph which will be of independent interest.

We recall that a vertex $x\in V$ is called a \emph{leaf} if it has only one neighbor in $G$, and a \emph{pendant edge}
in $G$ is an edge which is incident to a leaf. The \emph{whisker} $W_S(G)$ of a graph $G=(V,E)$ with respect to a given
subset $S=\{s_1,\ldots,s_k\}\subseteq V$ is the graph constructed from $G$ by $V(W_S(G)):=V\cup \{s'_1,\ldots,s'_k\}$
and $E(W_S(G))):=E\cup \{s_1s'_1,\ldots,s_ks'_k\}$. In particular, we abbreviate $W_V(G)$ by $W(G)$ (see~\cite{RV}). 

For a graph $G$ and $D\subseteq V(G)$, if $G-D$ is acyclic, i.e., contains no induced cycle, then $D$ is said to be a 
\emph{decycling set} of $G$. The size of a smallest decycling set of $G$ is called the \emph{decycling number} of $G$
and denoted by $\nabla(G)$ (see~\cite{BV}).

\begin{prop}\label{prop:edge-sub-div-decy}
$\nabla(G)=\nabla(\LE(G;e))$ for any graph $G$ and any edge $e=(u,v)$ of $G$.
\end{prop}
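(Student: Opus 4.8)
The plan is to show that the triple subdivision of an edge neither creates nor destroys any cycle structure in a way that changes the minimum decycling set, and so prove the two inequalities $\nabla(\LE(G;e)) \leq \nabla(G)$ and $\nabla(G) \leq \nabla(\LE(G;e))$ separately. Recall that $\LE(G;e)$ replaces the edge $e=(u,v)$ by the path $u-y-a-b-v$, introducing three new degree-two vertices $y,a,b$ (with $a$ of degree exactly two and $y,b$ of degree two as well, since $y$ is adjacent only to $u$ and $a$, and $b$ only to $a$ and $v$). The key structural observation I would isolate first is that a subdivided edge behaves like the original edge with respect to cycles: every induced cycle of $\LE(G;e)$ that uses the path $u-y-a-b-v$ corresponds to an induced cycle of $G$ using the edge $e$, and conversely, and cycles avoiding the subdivided edge are unaffected.

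First I would prove $\nabla(\LE(G;e)) \leq \nabla(G)$. Let $D$ be a minimum decycling set of $G$, so $|D|=\nabla(G)$ and $G-D$ is acyclic. I claim $D$ is also a decycling set of $\LE(G;e)$. The point is that any cycle in $\LE(G;e)-D$ would project, by contracting the path $u-y-a-b-v$ back to the edge $uv$ (equivalently, by replacing the length-four path through $y,a,b$ by the single edge $e$), to a closed walk in $G-D$; since the new vertices $y,a,b$ each have degree two and lie only on this path, any cycle through them must traverse the whole path, yielding a genuine cycle of $G-D$, contradicting acyclicity. Hence $\nabla(\LE(G;e)) \leq |D| = \nabla(G)$.

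Next I would prove the reverse inequality $\nabla(G) \leq \nabla(\LE(G;e))$. Let $D'$ be a minimum decycling set of $\LE(G;e)$. The mild subtlety here is that $D'$ might contain one of the subdivision vertices $y,a,b$, so I cannot directly use $D'$ as a subset of $V(G)$. I would argue that I may assume without loss of generality that $D' \cap \{y,a,b\} = \emptyset$: if $D'$ contains, say, $a$ (or $y$ or $b$), then removing that degree-two vertex from $D'$ and instead adding $u$ (or $v$) — a vertex of the original graph incident to the subdivided path — yields a decycling set of no larger size, because any cycle broken by deleting a subdivision vertex is a cycle through the whole path $u-y-a-b-v$, and such a cycle is equally broken by deleting $u$ or $v$. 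After this cleanup, $D' \subseteq V(G)$, and the same contraction argument shows $G-D'$ is acyclic, giving $\nabla(G) \leq |D'| = \nabla(\LE(G;e))$.

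The main obstacle, and the step deserving the most care, is the cleanup in the reverse inequality: I must verify that swapping a subdivision vertex in the decycling set for an adjacent original vertex cannot increase the size and cannot fail to remain a decycling set. This rests on the fact that every cycle passing through any of $y,a,b$ must use the entire path and hence also passes through both $u$ and $v$, so these cycles form a single family that is cut by deleting any one of $\{u,y,a,b,v\}$; the degree-two nature of the subdivision vertices (guaranteed by the construction, and worth stating explicitly) is exactly what makes this family coherent. Once this exchange lemma is established, both inequalities close routinely and the equality $\nabla(G)=\nabla(\LE(G;e))$ follows.
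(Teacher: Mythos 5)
Your proof is correct and follows essentially the same route as the paper: the inequality $\nabla(\LE(G;e))\leq\nabla(G)$ is immediate because a decycling set of $G$ survives subdivision, and the reverse inequality is obtained by the same exchange argument, trading any subdivision vertices in a minimum decycling set of $\LE(G;e)$ for $u$ or $v$, justified by the observation that every cycle through one of the degree-two subdivision vertices must traverse the whole path and hence pass through both $u$ and $v$. The paper organizes the cleanup slightly differently (first noting that minimality forces the decycling set to meet $\{x,y,z\}$ in at most one vertex and that $u,v$ cannot both belong to it), but the substance is identical.
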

\begin{proof}
Since any decycling set for $G$ would remain to be a decycling set after any triple subdivision,
the inequality $\nabla(\LE(G;e)\leq \nabla(G)$ trivially holds.

Suppose $S\subseteq V(\LE(G;e))$ is a minimum decycling set of $\LE(G;e)$, and let $u-x-y-z-v$ be the resulting path
obtained after the triple subdivision of $e$. Note that $|S\cap \{x,y,z\}|\leq 1$, since $S$ is minimal.
If $S\cap \{x,y,z\}=\emptyset$, then $S$ is a decycling set for $G$. So, assume that $S\cap \{x,y,z\}\neq \emptyset$.
In such a case, the vertices $u$ and $v$ can not both belong to $S$, since otherwise the set $S\backslash \{x,y,z\}$
would still be a decycling set for $\LE(G;e)$. Therefore, we may assume $u\notin S$ without loss of generality.
However, it then follows that $(S\backslash \{x,y,z\})\cup \{u\}$ is a decycling set for $G$. Thus, the inequality
$\nabla(G)\leq \nabla(\LE(G;e)$ holds.
\end{proof}

\begin{prop}\label{prop:whisker-vd}
If $S$ is a decycling set of a graph $G$, then the graph $W_S(G)$ is vertex decomposable.
\end{prop}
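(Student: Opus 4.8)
The plan is to induct on the size $|S|$ of the decycling set, peeling off one whiskered vertex at a time until only a forest remains. For the base case $S=\emptyset$ the graph $W_S(G)=G$ is acyclic, and forests are vertex decomposable: as long as an edge survives there is a leaf $\ell$, its neighbor is a shedding vertex (see below), and deleting it or its closed neighborhood leaves smaller forests, the recursion terminating at an edgeless graph. For the inductive step I would fix a vertex $s\in S$ and let $s'$ be its whisker, so that $s'$ is a leaf whose unique neighbor is $s$.

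The first substantive step is to check that $s$ is a shedding vertex of $W_S(G)$. For any independent set $I\subseteq V(W_S(G))\setminus N_{W_S(G)}[s]$, the set $I\cup\{s'\}$ is again independent, since the only neighbor of $s'$ is $s$, which is excluded from $I$; thus $s'\in N_{W_S(G)}(s)$ always witnesses the shedding condition. (This is just the standard fact that the neighbor of a leaf is a shedding vertex.)

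Next I would match the two deletion subgraphs with whiskerings of strictly smaller decycling instances. Deleting $s$ isolates its whisker, so $W_S(G)-s$ is $W_{S\setminus\{s\}}(G-s)$ together with the isolated vertex $s'$, and $S\setminus\{s\}$ is a decycling set of $G-s$ because $(G-s)-(S\setminus\{s\})=G-S$ is acyclic. Deleting $N_{W_S(G)}[s]=\{s,s'\}\cup N_G(s)$ leaves $W_{S'}(G-N_G[s])$ together with the now-isolated whiskers of the vertices of $S$ adjacent to $s$, where $S':=S\setminus N_G[s]$; and $S'$ is a decycling set of $G-N_G[s]$ because $(G-N_G[s])-S'=G-(S\cup N_G[s])$ is an induced subgraph of the forest $G-S$, hence a forest. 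Since $|S\setminus\{s\}|$ and $|S'|$ are both strictly less than $|S|$, the induction hypothesis applies to $W_{S\setminus\{s\}}(G-s)$ and $W_{S'}(G-N_G[s])$.

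To close the argument I need the auxiliary fact that adjoining isolated vertices to a vertex decomposable graph preserves vertex decomposability; this follows by a routine induction, as any shedding vertex of the original graph remains shedding after adjoining vertices adjacent to nothing, and the two deletions only adjoin further isolated vertices. Granting this, both $W_S(G)-s$ and $W_S(G)-N_{W_S(G)}[s]$ are vertex decomposable, and together with the shedding property of $s$ this completes the induction. The genuinely delicate points are the bookkeeping of which whiskers survive each deletion and the verification that the residual subsets remain decycling sets of the relevant induced subgraphs; the appearance of the isolated whiskers is the main thing to handle with care, but it is dispatched by the auxiliary fact above.
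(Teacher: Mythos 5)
Your proof is correct and follows essentially the same route as the paper's: induct on $|S|$, observe that a whiskered vertex $s\in S$ is a shedding vertex, identify $W_S(G)-s$ and $W_S(G)-N_{W_S(G)}[s]$ with whiskerings of $G-s$ and $G-N_G[s]$ over the residual decycling sets plus some isolated whiskers, and absorb the isolated vertices. You are in fact slightly more careful than the paper, which leaves the base case (forests), the shedding verification, and the isolated-vertex lemma implicit.
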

\begin{proof}
Consider  $x\in S$, and let $x'$ be the leaf neighbor of $x$ in $W_S(G)$. 
We note that the vertex $x$ is clearly a shedding vertex of $W_S(G)$ and 
\begin{align*}
(W_S(G)-x)-x'=&W_{S\backslash \{x\}}(G-x)\quad\textrm{and}\quad \\
&(W_S(G)-N_{W_S(G)}[x])-\{u'\colon u\in N_G(x)\}=W_{S\backslash N_G[x]}(G-N_G[x]).
\end{align*}
Moreover, the sets $S\backslash \{x\}$ and $S\backslash N_G[x]$ are decycling sets for graphs $G-x$ and $G-N_G[x]$ respectively; 
hence, both graphs $W_{S\backslash \{x\}}(G-x)$ 
and $W_{S\backslash N_G[x]}(G-N_G[x])$ are vertex decomposable by the induction on $|S|$. 
It then follows that the graphs $W_S(G)-x$ and $W_S(G)-N_{W_S(G)}[x]$
are vertex decomposable, since the vertex $x'$ is an isolated vertex of $W_S(G)-x$, and similarly, $\{u'\colon u\in N_G(x)\}$
is a set of isolated vertices in $W_S(G)-N_{W_S(G)}[x]$.
\end{proof}

\begin{thm}\label{thm:reg-decycling}
For any graph $G$, we have $\reg(G)\leq \im(G)+\nabla(G)$.
\end{thm}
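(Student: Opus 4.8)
The plan is to reduce, via triple subdivisions, to the case where $G$ is $(C_4,C_5)$-free, because only then can I combine the whiskering construction of Proposition~\ref{prop:whisker-vd} with the equality of Theorem~\ref{thm:vd-reg}; I then transport the inequality back to $G$ using the fact that triple subdivisions keep all three invariants under control. Concretely, I would first apply triple subdivisions repeatedly to edges lying on induced cycles of length four or five. A triple subdivision replaces an edge by a path with three new internal vertices, so it lengthens every cycle through that edge by three and creates no new cycle; hence after finitely many such operations I reach a graph $G'$ that is $(C_4,C_5)$-free. By Corollary~\ref{cor:lozin+edgesubdiv} each triple subdivision raises $\reg$ by one, and, since a triple subdivision is a special Lozin transformation, Lemma~\ref{lem:lozin-inmatch} shows it raises $\im$ by one as well, so $\reg(G')-\im(G')=\reg(G)-\im(G)$. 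By Proposition~\ref{prop:edge-sub-div-decy} the decycling number is unchanged, so $\nabla(G')=\nabla(G)$. It therefore suffices to prove the bound for $G'$.

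Next I would whisker. Let $S$ be a minimum decycling set of $G'$, so that $|S|=\nabla(G')$. By Proposition~\ref{prop:whisker-vd} the whisker $W_S(G')$ is vertex decomposable, and since attaching pendant edges introduces no new cycles, $W_S(G')$ inherits the $(C_4,C_5)$-freeness of $G'$. Thus Theorem~\ref{thm:vd-reg} applies and yields $\reg(W_S(G'))=\im(W_S(G'))$.

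It then remains to sandwich $\reg(G')$. On one side, $G'$ is the induced subgraph $W_S(G')[V(G')]$, so $\IE(G')$ is an induced subcomplex of $\IE(W_S(G'))$, and the monotonicity of regularity under induced subcomplexes (Section~$2$) gives $\reg(G')\le\reg(W_S(G'))$. On the other side, I would show $\im(W_S(G'))\le\im(G')+|S|$: given an induced matching $M$ of $W_S(G')$, split it into its pendant edges $M_1$ and its remaining edges $M_0$. The edges among $V(M_0)\subseteq V(G')$ in $W_S(G')$ are exactly those of $G'$, so $M_0$ is an induced matching of $G'$ and $|M_0|\le\im(G')$; and $|M_1|\le|S|$ since there is at most one pendant edge per vertex of $S$. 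Chaining these,
\begin{equation*}
\reg(G')\le\reg(W_S(G'))=\im(W_S(G'))\le\im(G')+\nabla(G'),
\end{equation*}
and the reduction step turns this into $\reg(G)\le\im(G)+\nabla(G)$.

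The genuinely delicate point is the reduction: one must ensure that triple subdivisions can destroy all induced $4$- and $5$-cycles while \emph{simultaneously} preserving $\nabla$ exactly and shifting $\reg$ and $\im$ by the same amount. This is precisely why I would insist on triple subdivisions rather than arbitrary Lozin transformations, for which the invariance of $\nabla$ has not been established (Proposition~\ref{prop:edge-sub-div-decy} is stated only for triple subdivisions). Everything after the reduction is routine: the induced-matching inequality is elementary, the regularity comparison is pure monotonicity, and the vertex-decomposability input is exactly Proposition~\ref{prop:whisker-vd}.
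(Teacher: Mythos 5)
Your proposal is correct and follows essentially the same route as the paper: triple-subdivide to reach a $(C_4,C_5)$-free graph $G'$ (shifting $\reg$ and $\im$ by the same amount and preserving $\nabla$ via Proposition~\ref{prop:edge-sub-div-decy}), whisker along a minimum decycling set, invoke Proposition~\ref{prop:whisker-vd} and Theorem~\ref{thm:vd-reg}, and chain $\reg(G')\le\reg(W_S(G'))=\im(W_S(G'))\le\im(G')+\nabla(G')$. The only difference is that you spell out the termination of the subdivision process and the elementary inequality $\im(W_S(G'))\le\im(G')+|S|$, which the paper leaves implicit.
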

\begin{proof}
We let $G'$ be a graph obtained from $G$ by a necessary number, say $k\geq 0$, of triple subdivisions on $G$ so that it is
a $(C_4,C_5)$-free graph. We first recall that $\nabla(G)=\nabla(G')$ by Proposition~\ref{prop:edge-sub-div-decy}.

Assume that $S$ is a minimal decycling set for $G'$. By Proposition~\ref{prop:whisker-vd}, the graph
$W_S(G')$ is vertex decomposable. We therefore have $\reg(W_S(G'))=\im(W_S(G'))$ by Theorem~\ref{thm:vd-reg}, since
$W_S(G')$ is also $(C_4,C_5)$-free. Now, it follows that 
\begin{equation*}
\reg(G)+k=\reg(G')\leq \reg(W_S(G'))=\im(W_S(G'))\leq \im(G')+\nabla(G')=\im(G)+k+\nabla(G)
\end{equation*}
so that $\reg(G)\leq \im(G)+\nabla(G)$ as required.
\end{proof}

We remark that Theorem~\ref{thm:reg-decycling} is particularly useful when the graph has a small decycling number. Having this in mind,
we recall that a graph $G$ is called \emph{unicyclic} provided that it has exactly one cycle. 
\begin{cor}
If $G$ is a unicyclic graph, then $\im(G)\leq \reg(G)\leq \im(G)+1$. 
\end{cor}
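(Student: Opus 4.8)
The plan is to apply Theorem~\ref{thm:reg-decycling} to the unicyclic case. The key observation is that a unicyclic graph $G$ has decycling number $\nabla(G)=1$: since $G$ contains exactly one cycle, deleting any single vertex lying on that cycle yields an acyclic graph, so a one-element decycling set exists, and $\nabla(G)\geq 1$ because $G$ is not itself acyclic. Once $\nabla(G)=1$ is established, Theorem~\ref{thm:reg-decycling} gives $\reg(G)\leq \im(G)+\nabla(G)=\im(G)+1$ directly.

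The lower bound $\im(G)\leq \reg(G)$ requires no special argument at all, since it is exactly the left-hand inequality of Theorem~\ref{thm:im-reg-cd}, which holds for every graph. Combining these two bounds yields the desired chain $\im(G)\leq \reg(G)\leq \im(G)+1$.

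First I would state and verify the claim $\nabla(G)=1$, which is the only content-bearing step. The verification is short: the definition of unicyclic guarantees exactly one cycle $C$, and removing any vertex $v\in V(C)$ destroys that cycle without introducing new ones (deletion of a vertex cannot create cycles), so $G-v$ is acyclic and $\{v\}$ is a decycling set; conversely a unicyclic graph contains a cycle and hence is not acyclic, forcing $\nabla(G)\geq 1$. Then I would simply cite Theorem~\ref{thm:reg-decycling} for the upper bound and Theorem~\ref{thm:im-reg-cd} for the lower bound.

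There is essentially no obstacle here; the corollary is a direct specialization. The only point demanding a moment of care is confirming that vertex deletion does not create new cycles, so that a single well-chosen vertex genuinely suffices as a decycling set—but this is immediate, since any cycle in $G-v$ would already be a cycle in $G$ avoiding $v$, and $G$ has only one cycle which passes through $v$.
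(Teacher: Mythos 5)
Your proof is correct and follows exactly the route the paper intends: the corollary is stated without proof, immediately after the remark that Theorem~\ref{thm:reg-decycling} is most useful for graphs with small decycling number, and the implicit argument is precisely that $\nabla(G)=1$ for a unicyclic graph combined with the lower bound from Theorem~\ref{thm:im-reg-cd}. Your verification that a single vertex on the unique cycle forms a decycling set is the only substantive step, and you handle it correctly.
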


%%%%%%%%%%%%%%%%%%%%%%%
The following is proved in~\cite{RW2} in the language of very well-covered graphs, we here provide an alternative
one relying on Theorems~\ref{thm:lozin+reg}, \ref{thm:vd-reg} and \ref{thm:reg-decycling}.

\begin{prop}\label{prop:reg-whisker}
For any graph $G$, we have $\reg(W(G))=\im(W(G))=\alpha(G)$.
\end{prop}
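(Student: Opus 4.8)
The plan is to prove the chain $\reg(W(G)) = \im(W(G)) = \alpha(G)$ by establishing each equality separately, leaning on the vertex-decomposability machinery developed earlier. The key observation is that $W(G) = W_V(G)$, i.e. whiskering with respect to the entire vertex set $V$. Since $V$ is trivially a decycling set of $G$ (because $G - V$ is the empty graph, which is acyclic), Proposition~\ref{prop:whisker-vd} immediately gives that $W(G)$ is vertex decomposable. I would then like to invoke Theorem~\ref{thm:vd-reg} to conclude $\reg(W(G)) = \im(W(G))$, but that theorem requires the graph to be $(C_4,C_5)$-free. The cleanest route around this is to note that $W(G)$ admits an even stronger decomposition property: every whiskered graph $W(G)$ is known to be vertex decomposable (indeed Cohen--Macaulay, by the classical result of Villarreal on the graph obtained by adding a whisker to each vertex), and in such a fully whiskered graph the regularity-equals-induced-matching statement can be derived directly. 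I would therefore first establish $\reg(W(G)) = \im(W(G))$ via vertex decomposability, handling the $(C_4,C_5)$-freeness obstruction as discussed below.

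The second equality $\im(W(G)) = \alpha(G)$ is the more combinatorial half, and I would prove it by exhibiting a bijective correspondence between maximum independent sets of $G$ and maximum induced matchings of $W(G)$. Given an independent set $I = \{v_1,\ldots,v_t\}$ in $G$, the pendant edges $\{v_i v_i' : v_i \in I\}$ form an induced matching in $W(G)$: since $I$ is independent, no two of the $v_i$ are adjacent, and the leaves $v_i'$ have $v_i$ as their only neighbor, so the subgraph induced on $\{v_1,v_1',\ldots,v_t,v_t'\}$ is exactly $t$ disjoint copies of $K_2$. This gives $\im(W(G)) \geq \alpha(G)$. For the reverse inequality, I would take a maximum induced matching $M$ of $W(G)$ and argue that one may assume, without decreasing $|M|$, that every edge of $M$ is a pendant edge: any non-pendant edge $uv$ of $M$ can be replaced by the pendant edge $u u'$ (or $v v'$) since the leaf $u'$ is attached to $u$, and this substitution preserves the induced-matching property while keeping the cardinality fixed. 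Once $M$ consists entirely of pendant edges $\{v_i v_i'\}$, the set of their non-leaf endpoints $\{v_i\}$ must be independent in $G$ — otherwise two such edges would fail to be induced — yielding $\im(W(G)) \leq \alpha(G)$.

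For the regularity equality, the main technical point is reconciling the $(C_4,C_5)$-free hypothesis of Theorem~\ref{thm:vd-reg} with an arbitrary $G$. I would resolve this using the triple-subdivision technique already employed in the proof of Theorem~\ref{thm:reg-decycling}: apply a sequence of triple subdivisions to $G$ (and correspondingly to $W(G)$) to land in a $(C_4,C_5)$-free graph, apply Theorem~\ref{thm:vd-reg} there, and then transport the conclusion back using Theorem~\ref{thm:lozin+reg} (together with Lemma~\ref{lem:lozin-inmatch}), since each triple subdivision raises both $\reg$ and $\im$ by exactly one. One must check that triple-subdividing edges of $W(G)$ interacts compatibly with the whisker structure, but since the pendant edges are untouched and the correspondence of the previous paragraph is stable under subdivision of the non-pendant part, the bookkeeping of the $+k$ offsets cancels exactly as in Theorem~\ref{thm:reg-decycling}.

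The hard part will be the careful verification that the induced-matching substitution in the second paragraph — replacing non-pendant matching edges by pendant ones — genuinely preserves the \emph{induced} condition rather than merely the matching condition; one must ensure that after swapping, no new edge of $W(G)$ connects two distinct matching edges. The leaf endpoints cause no trouble (they have a unique neighbor), so the only edges to scrutinize are those among the retained non-leaf endpoints, and here the argument reduces precisely to the independence of $\{v_i\}$ in $G$. I expect this interplay, rather than the vertex-decomposability input, to be where the proof requires genuine care.
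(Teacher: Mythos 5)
Your proposal follows essentially the same route as the paper: whisker a triple-subdivided $(C_4,C_5)$-free copy $G'$ of $G$ with respect to the decycling set $V(G)$, apply Proposition~\ref{prop:whisker-vd} and Theorem~\ref{thm:vd-reg} there, and cancel the $+k$ offsets via Corollary~\ref{cor:lozin+edgesubdiv} and Lemma~\ref{lem:lozin-inmatch}. The only difference is that you spell out the pendant-edge swap argument for $\im(W(G))=\alpha(G)$, which the paper dismisses as easily verified; your verification is correct.
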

\begin{proof}
We follow the proof of Theorem~\ref{thm:reg-decycling} and write $G'$ for the graph obtained from $G$ by $k$-triple subdivisions on $G$
so that $G'$ is $(C_4,C_5)$-free. Now, the set $V=V(G)$ is a decycling set for $G'$ so that $W_V(G')$ is vertex decomposable by 
Proposition~\ref{prop:whisker-vd}. Therefore, we have $\reg(W_V(G'))=\im(W_V(G'))$. However, the graph $W_V(G')$ can be constructed
from $W(G)$ by $k$-triple subdivisions on the same edges of $G$, that is, $W_V(G')=W(G)'$. Thus, we have
\begin{equation*}
\im(W(G))+k=\im(W(G)')=\reg(W(G)')=\reg(W(G))+k
\end{equation*}
so that $\im(W(G))=\reg(W(G))$. On the other hand, the equality $\im(W(G))=\alpha(G)$ can be easily verified. 
\end{proof}

%%%%%%%%%%%%%%%%%%%%%%%%%%%%%%%%%%%%%%%%%%%%%%%%%%%%%%%%%%%%%%%%%%%%%%%%%%%%%%%%%%%%%%%%%%%%%%%%%

\end{document}